\documentclass[a4paper,12pt]{article}

\makeatletter
\@addtoreset{footnote}{page}
\makeatother

\usepackage{amsthm}
\usepackage{amsmath,amssymb,latexsym,amsfonts,mathrsfs}

\renewcommand{\hat}{\widehat}

\renewcommand{\bar}{\overline}

\theoremstyle{plain}
\newtheorem{Thm}{Theorem}[section]

\newtheorem{Lem}[Thm]{Lemma}

\newtheorem{Cor}[Thm]{Corollary}

\theoremstyle{definition}

\newtheorem{Rem}[Thm]{Remark}

\numberwithin{equation}{section}

\makeatletter
\renewcommand\section{\@startsection {section}{1}{\z@}%
                                   {-3.5ex \@plus -1ex \@minus -.2ex}%
                                   {2.3ex \@plus.2ex}%
                                   {\normalfont\large\bf}}
\makeatother

\makeatletter
\renewcommand\subsection{\@startsection {subsection}{1}{\z@}%
                                   {-3.5ex \@plus -1ex \@minus -.2ex}%
                                   {2.3ex \@plus.2ex}%
                                   {\normalfont\normalsize\bf}}
\makeatother

\begin{document}

\begin{center}
{\Large \bf 
Stable functional CLTs for scaled elephant random walks
}
\end{center}
\begin{center}
Go Tokumitsu
\end{center}

\begin{abstract}
We establish stable functional central limit theorems for scaled elephant random walks in the diffusive, critical, and superdiffusive cases using the martingale approach.
\end{abstract}


\section{Introduction}
The Elephant Random Walk (ERW), introduced by Sch\"utz--Trimper \cite{Sch} in 2004, is a one-dimensional discrete-time process with infinite memory. Throughout this paper, we consider the ERW defined on a probability space $(\Omega, \mathcal{F}, P)$. The model provides a simple example of anomalous diffusion and exhibits a phase transition:

Let $S_0=0$. The first step $X_1$ is defined by
\[X_1 = \begin{cases} 
+1 & (\text{with probability $q$}), \\
-1 & (\text{with probability $1-q$}).
\end{cases}
\]
For $n \ge 1$, let $U_n$ be a random variable uniformly distributed on $\{1, \dots, n\}$. The $(n+1)$-th increment $X_{n+1}$ is then defined as
\[
X_{n+1} = \begin{cases} 
+X_{U_n} & (\text{with probability $p$}), \\
-X_{U_n} & (\text{with probability $1-p$}),
\end{cases}
\]
where all the random choices are assumed to be independent. The position of the walk at time $n+1$ is given by $S_{n+1} = S_n + X_{n+1}$ for $n \ge 1$. The stochastic process $(S_n)_{n=0}^\infty$ defined in this manner is called the Elephant Random Walk (ERW). The ERW has been extended to various spatial structures, including multidimensional lattices \cite{MERW1, MERWFCLT} and more general periodic structures \cite{shibata2025functional}. However, in this paper, we restrict our attention to the one-dimensional case.

Sch\"utz and Trimper \cite{Sch} showed that the ERW exhibits a phase transition and anomalous diffusion:
\begin{align*}
E[S_n^2]\sim\begin{cases}
(3-4p)^{-1}n&(p<3/4)\\
n\log n&(p=3/4)\\
((4p-3)\Gamma(4p-2))^{-1}n^{4p-2}&(p>3/4)
\end{cases},
\end{align*}
where $x_n\sim y_n$ means that $x_n/y_n\to1$ as $n\to\infty$. Based on this asymptotic behavior of the second moment, the ERW is called diffusive when $p<3/4$ and superdiffusive when $p>3/4$.

Let $\mathcal{F}_n = \sigma(X_1, \dots, X_n)$ be the natural filtration. Although the ERW may look like a non-Markovian process because it remembers its entire past, it is actually a time-inhomogeneous Markov chain. Indeed,  the conditional distribution of $X_{n+1}$ is determined by its conditional expectation:
\begin{align}
E[X_{n+1} | \mathcal{F}_n] = \frac{(2p-1)S_n}{n} \quad \text{a.s.}
\end{align}
Since $X_{n+1} \in \{-1, +1\}$, the transition probabilities are given by
\begin{align}\label{Markov}
P(S_{n+1}=S_n \pm 1 | S_0, S_1, \dots, S_n) = \frac{1}{2} \pm \left(p - \frac{1}{2}\right) \frac{S_n}{n}.
\end{align}

To analyze the fluctuations of the ERW, we introduce a sequence $(a_n)_{n=1}^\infty$ defined by
\begin{align*}
a_n := \prod_{k=1}^{n-1} \left( 1 + \frac{2p-1}{k} \right)^{-1} = \frac{\Gamma(2p)\Gamma(n)}{\Gamma(n+2p-1)}.
\end{align*}
It is well known that $\{M_n, \mathcal{F}_n\}_{n=0}^\infty$ defined by $M_0 = 0$ and $M_n = a_n S_n$ for $n \ge 1$ is a martingale. By Stirling's formula for the gamma function, the $(a_n)$ exhibit the following asymptotic behavior:
\begin{align*}
a_n \sim \frac{\Gamma(2p)}{n^{2p-1}} \quad \text{as } n \to \infty.
\end{align*}

Our main goal in this paper is to establish functional central limit theorems (FCLTs) with stable convergence for scaled ERW by martingale approach.

\subsection{Main results}
Throughout this paper, the symbol ``$\Longrightarrow$'' denotes convergence in distribution. Let $D:=D([0,\infty))$ denote the Skorokhod space, i.e., the set of right-continuous functions with left-hand limits equipped with the Skorokhod topology, and let $C\subset D$ denote the subset of continuous functions.  All joint convergences stated below imply stable convergence, and in particular mixing convergence, with respect to $\mathcal{F}$; see \cite{mixings, mixingrv, stableev} for the original contributions on stable and mixing convergence. For definitions and further details, we refer the reader to \cite{stableconv, JacodShiryaev}. 

We state our main theorems about scaled ERWs in several cases. The ways of scaling and the limit processes may vary according to each case.

First, we treat with the superdiffusive case. We focus on the scaled ERW $((1+t)^{1-2p}S_{n+[nt]}-S_n)/\sqrt{n}$, where we subtract $S_n$ from the shifted process $(1+t)^{1-2p}S_{n+[nt]}$, as follows.
\begin{Thm}\label{mThm}
Let $(S_n)$ be the ERW with $3/4<p<1$. Then the following joint convergence in $D\times \mathcal{Y}$ holds:
\begin{align}\label{mFCLT}
\left(\left(\frac{(1+t)^{1-2p}S_{n+[nt]}-S_n}{\sqrt{n}}\right)_{t\ge0},Y\right)\Longrightarrow \left(\left(\frac{1}{\sqrt{4p-3}}W(1-(1+t)^{3-4p})\right)_{t\ge0},Y\right),
\end{align}
where $Y$ is an arbitrary $\mathcal{F}$-measurable random variable taking values in a separable metrizable space $\mathcal{Y}$ and where $W=(W(t))_{t\ge0}$ is a standard Brownian motion independent of $Y$. In particular, we may take $Y=L_{p,q}$, where $L_{p,q}$ denotes the almost sure limit of $S_n/n^{2p-1}$ given in \eqref{SLLN}.
\end{Thm}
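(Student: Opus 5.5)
We write the scaled process in terms of the martingale $M_n=a_nS_n$. Since $a_m\sim\Gamma(2p)m^{1-2p}$, we have, uniformly for $t$ in compacts,
\[
(1+t)^{1-2p}S_{n+[nt]}=\frac{(1+t)^{1-2p}}{a_{n+[nt]}}M_{n+[nt]}=\frac{1}{a_n}\bigl(1+o(1)\bigr)M_{n+[nt]}.
\]
The relative error is $O(1/n)$ from Stirling together with the integer-part correction. Hence
\[
\frac{(1+t)^{1-2p}S_{n+[nt]}-S_n}{\sqrt n}=\frac{M_{n+[nt]}-M_n}{a_n\sqrt n}+R_n(t),\qquad |R_n(t)|\lesssim \frac{|S_{n+[nt]}|}{n^{3/2}}.
\]
Since $|S_m|\le m$, the remainder satisfies $\sup_{t\le T}|R_n(t)|=O(n^{-1/2})$, so $R_n\to0$ uniformly on compacts. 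It therefore suffices to prove the stable FCLT for
\[
Z_n(t):=\frac{M_{n+[nt]}-M_n}{a_n\sqrt n}=\sum_{k=n+1}^{n+[nt]}\xi_{n,k},\qquad \xi_{n,k}=\frac{a_k\varepsilon_k}{a_n\sqrt n},\quad \varepsilon_k=X_k-E[X_k\mid\mathcal F_{k-1}].
\]
This is a triangular array of martingale differences with respect to the nested filtrations $\mathcal G_{n,j}=\mathcal F_{n+j}$.

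We then verify the hypotheses of the stable martingale FCLT (e.g.\ Crimaldi--Pratelli type, or Häusler--Luschgy Theorem 6.23 / Jacod--Shiryaev VIII with the nesting condition), which give convergence to a continuous Gaussian martingale with deterministic bracket, $\mathcal F$-mixing. The steps are as follows.

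\textbf{Lindeberg condition.} We have $|\xi_{n,k}|\le 2a_k/(a_n\sqrt n)\le C n^{-1/2}$ for $k\ge n$, since $a_k\le a_n$ for $p>1/2$. The Lindeberg condition is therefore trivial.

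\textbf{Bracket.} Here $E[\varepsilon_k^2\mid\mathcal F_{k-1}]=1-(2p-1)^2S_{k-1}^2/(k-1)^2$. In the superdiffusive case $S_k/k\to0$ a.s. by the strong law (indeed $S_k/k^{2p-1}\to L_{p,q}$ with $2p-1<1$), so the conditional variance tends to $1$ a.s. Consequently
\[
\sum_{k=n+1}^{n+[nt]}E[\xi_{n,k}^2\mid\mathcal F_{k-1}]\sim \frac{1}{n}\sum_{k=n+1}^{n+[nt]}\Bigl(\frac{k}{n}\Bigr)^{2-4p}\to\int_1^{1+t}u^{2-4p}\,du=\frac{1-(1+t)^{3-4p}}{4p-3}
\]
almost surely, for each $t$. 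Here we use $a_k/a_n\sim (k/n)^{1-2p}$ uniformly, together with a Cesàro argument to absorb the $o(1)$ terms.

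\textbf{Nesting.} Because the filtration is $\mathcal F_{n+j}$ and we start at time $n\to\infty$, we have $\mathcal F_n\uparrow\mathcal F_\infty$, and $\mathcal F_\infty$ generates the relevant $\sigma$-field. Stable convergence is therefore with respect to $\mathcal F_\infty$, which suffices since $Y$ can be replaced by $E$-approximations measurable with respect to $\mathcal F_\infty$; without loss of generality $\mathcal F=\mathcal F_\infty$ on the path space.

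The limit is then the continuous Gaussian martingale with bracket $(1-(1+t)^{3-4p})/(4p-3)$, i.e.\ $(4p-3)^{-1/2}W(1-(1+t)^{3-4p})$, independent of $\mathcal F$. Joint convergence with any $\mathcal F$-measurable $Y$ follows by the definition of stable convergence, and $Y=L_{p,q}$ is allowed because $L_{p,q}$ is $\mathcal F_\infty$-measurable. Finally, we add $R_n\to0$ in probability in $D$ via Slutsky's lemma for stable convergence.

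The main obstacle is the stable/mixing part. The increments start at the moving time $n$, so we must make sure the theorem we invoke handles arrays started at $\mathcal F_n$ with limit independent of $\mathcal F_\infty$. The cleanest way is to check directly, for bounded $\mathcal F_m$-measurable $Y$ with fixed $m$, that $E[Yf(Z_n)]\to E[Y]E[f(\text{limit})]$. For $n>m$, conditioning on $\mathcal F_m$ leaves a martingale array whose bracket has a deterministic a.s. limit, so the conditional FCLT gives this convergence. A density argument then extends it to all of $\mathcal F_\infty$.
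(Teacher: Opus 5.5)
Your proposal is correct and follows essentially the same route as the paper. Both use the array $\Delta M_{n+k}$ normalized at scale $n^{3/2-2p}$ with the nested filtration $\mathcal{F}_{n+k}$, check the bracket and Lindeberg conditions of a stable martingale FCLT (the paper cites Touati's version), and then remove the gap between $a_{n+[nt]}/a_n$ and $(1+t)^{1-2p}$ using the uniform $O(1/n)$ ratio estimate, $|S_m|\le m$, and Slutsky.

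The differences are cosmetic. You normalize by $a_n\sqrt{n}\sim\sqrt{4p-3}\,s_n$ instead of $s_n$. You also check Lindeberg via the deterministic bound $|\xi_{n,k}|\le 2/\sqrt{n}$ (since $a_k$ decreases for $p>1/2$), which is slightly slicker than the paper's fourth-moment bound.
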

\begin{Rem}
\begin{enumerate}
\item By the previous result by Coletti--Gava--Sch\"utz (Theorem \ref{nondegThm}),
\begin{align*}
\frac{(1+t)^{1-2p}S_{n+[nt]}-S_n}{n^{2p-1}}
\xrightarrow[n\to\infty]{a.s.}
L_{p,q}-L_{p,q}=0,
\end{align*}
so Theorem \ref{mThm} describes the $\sqrt{n}$ order fluctuations of $(1+t)^{1-2p}S_{n+[nt]}$ around $S_n$. In contrast, the previous result of the CLT by Kubota--Takei \cite{KaT} (Theorem \ref{KubotaTakei}) concerns the fluctuations of $S_n$ around the random drift $n^{2p-1}L_{p,q}$.
\item This result reveals that the convergence in the first component of \eqref{mFCLT} is not affected by the random variable $L_{p,q}$.
\end{enumerate}
\end{Rem}
Second, we treat with the diffusive case. The following theorem states the FCLT for diffusive ERWs in the $\sqrt{n}$ order, which extends the previous result of the FCLT by Baur--Bertoin (Theorem \ref{FCLTERW1}) to stable convergence.
\begin{Thm}\label{FCLT3stable}
Let $(S_n)$ be the ERW with $0\le p<3/4$. Then the following joint convergence in $D\times \mathcal{Y}$ holds:
\begin{align}\label{sFCLT3}
\left(\left(\frac{S_{[nt]}}{\sqrt{n}}\right)_{t\ge0},Y\right)\Longrightarrow \left(\left(\frac{1}{\sqrt{3-4p}}t^{2p-1}W(t^{3-4p})\right)_{t\ge0},Y\right),
\end{align}
where $Y$ and $W$ are those in Theorem \ref{mThm}.
\end{Thm}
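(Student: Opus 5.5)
We prove Theorem \ref{FCLT3stable} with the martingale approach, through a stable functional CLT for martingale triangular arrays (for instance, the stable version of the martingale FCLT in \cite{stableconv}, Theorem 6.23 type, or \cite{JacodShiryaev}). Put $\alpha:=2p-1<1/2$ and write $S_{[nt]}=M_{[nt]}/a_{[nt]}$ with $M_k=a_kS_k$ and $a_k\sim\Gamma(2p)k^{-\alpha}$. The main step is to show that the rescaled martingale
\[
N^{(n)}(t):=\frac{M_{[nt]}}{\Gamma(2p)\,n^{1/2-\alpha}}=\sum_{k\le [nt]}\xi_{n,k},\qquad \xi_{n,k}:=\frac{a_k(X_k-\alpha S_{k-1}/(k-1))}{\Gamma(2p)n^{1/2-\alpha}}
\]
(with the convention $\xi_{n,1}=a_1X_1/(\Gamma(2p)n^{1/2-\alpha})$) converges $\mathcal{F}$-stably in $D$ to $(3-4p)^{-1/2}W(t^{3-4p})$, with $W$ independent of $\mathcal{F}$. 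Here $M_k-M_{k-1}=a_k(X_k-E[X_k|\mathcal F_{k-1}])$, since $a_{k-1}=a_k(1+\alpha/(k-1))$.

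To apply the stable martingale FCLT with nested filtrations $\mathcal{F}_{n,k}=\mathcal{F}_k$, we check three conditions.
\begin{enumerate}
\item \emph{Conditional Lindeberg condition.} We have $|\xi_{n,k}|\le 2a_k/(\Gamma(2p)n^{1/2-\alpha})=O(n^{-1/2}(k/n)^{-\alpha})$. The sum of fourth powers over $k\le nT$ is then $O(n^{-2}\sum_k (k/n)^{-4\alpha})$. This is $o(1)$ when $\alpha\ge 0$ and $4\alpha<1$. For $\alpha<0$ the bound is simply $O(n^{-1})$. For $1/4\le\alpha<1/2$ it is bounded by $n^{-2+4\alpha}\sum k^{-4\alpha}=O(n^{-1}\log n+n^{-2+4\alpha})\to0$.
\item \emph{Conditional variance.} We have $E[(X_k-\alpha S_{k-1}/(k-1))^2|\mathcal F_{k-1}]=1-\alpha^2S_{k-1}^2/(k-1)^2$. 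Summing,
\[
\langle N^{(n)}\rangle_{[nt]}=\frac{1}{\Gamma(2p)^2n^{1-2\alpha}}\sum_{k\le nt}a_k^2\Bigl(1-\alpha^2\frac{S_{k-1}^2}{(k-1)^2}\Bigr).
\]
By the strong law $S_k/k\to0$ a.s. in the diffusive regime (which follows from the earlier results quoted in the introduction, or directly from $E[S_n^2]=O(n)$ plus a martingale argument), and since $\sum_{k\le nt}a_k^2\sim\Gamma(2p)^2(nt)^{1-2\alpha}/(1-2\alpha)$ with $1-2\alpha=3-4p>0$, a Ces\`aro/Toeplitz argument gives $\langle N^{(n)}\rangle_{[nt]}\to t^{3-4p}/(3-4p)$ a.s. for each $t$. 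This limit is deterministic and continuous.
\item \emph{Nesting.} Because the filtration does not depend on $n$ and the limit variance is deterministic, the nesting condition needed for stability holds automatically. Alternatively, use the form of the stable FCLT requiring only convergence in probability of the quadratic variation to a deterministic limit, which yields mixing convergence.
\end{enumerate}
Stable convergence of $N^{(n)}$ then gives joint convergence $(N^{(n)},Y)\Rightarrow(G,Y)$ for any $\mathcal F$-measurable $Y$, with $G$ independent of $Y$.

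Finally we return to $S$. We have
\[
\frac{S_{[nt]}}{\sqrt n}=\frac{\Gamma(2p)n^{-\alpha}}{a_{[nt]}}\,N^{(n)}(t),\qquad \frac{\Gamma(2p)n^{-\alpha}}{a_{[nt]}}\to t^{\alpha}
\]
uniformly on compacts of $(0,\infty)$. By the continuous mapping theorem, applied jointly with $Y$, the process converges on $[\varepsilon,T]$ to $(3-4p)^{-1/2}t^{2p-1}W(t^{3-4p})$.

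We expect the main obstacle to be the behavior near $t=0$, where $t^{\alpha}$ blows up when $\alpha<0$, that is, $p<1/2$. There we need the tightness estimate
\[
\lim_{\varepsilon\to0}\limsup_n P\Bigl(\sup_{t\le\varepsilon}|S_{[nt]}|/\sqrt n>\eta\Bigr)=0 .
\]
We would obtain it by splitting into dyadic blocks $[2^{-j-1}\varepsilon,2^{-j}\varepsilon]$ and applying Doob's inequality to $M$ on each block. This bounds $P(\sup_{\text{block}}|S|>\eta\sqrt n)$ by $C\,a_{\min}^{-2}E[M^2_{\text{end}}]/(\eta^2 n)\lesssim C2^{-j}\varepsilon/\eta^2$, which is summable in $j$. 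The limit process is continuous and vanishes at $0$, since $t^{\alpha}\sqrt{t^{1-2\alpha}}=\sqrt t$. Combining this with the convergence on $[\varepsilon,T]$ through a standard approximation lemma (Billingsley Theorem 3.2 type, stated jointly with $Y$) gives \eqref{sFCLT3} in $D\times\mathcal Y$.
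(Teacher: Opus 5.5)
Your proposal is correct and follows the same route as the paper. You use the same martingale array $\Delta M_k/(\Gamma(2p)n^{3/2-2p})$ with $\mathcal F_{n,k}=\mathcal F_k$ and $k_n(t)=[nt]$, and feed it into a stable martingale FCLT (the paper uses Touati's version) with deterministic variance limit $t^{3-4p}/(3-4p)$. Your transfer back to $S$ via $\Gamma(2p)n^{-\alpha}/a_{[nt]}\to t^{\alpha}$, and the dyadic Doob bound near $t=0$ for $p<1/2$, fill in steps that the paper's sketch leaves implicit and defers to its reference.
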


The following corollary states the FCLT for scaled diffusive ERWs, which follows by applying the Continuous Mapping Theorem to the convergence in distribution \eqref{sFCLT3}.
\begin{Cor}\label{FCLTjoint1} Let $(S_n)$ be the ERW with $0\le p<3/4$. Then the following joint convergence in $D\times D\times\mathcal{Y}$ holds: 
\begin{align}
&\left(
\left(\frac{S_{[nt]}}{\sqrt{n}}\right)_{t\ge0},
\left(\frac{(1+t)^{1-2p}S_{n+[nt]}-S_n}{\sqrt{n}}\right)_{t\ge0}
,Y\right)
\nonumber\\
&\qquad\Longrightarrow
\left(
\left(\frac{1}{\sqrt{3-4p}}t^{2p-1}W(t^{3-4p})\right)_{t\ge0},
\left(\frac{1}{\sqrt{3-4p}}W((1+t)^{3-4p}-1)\right)_{t\ge0}
,Y\right),
\end{align}
where $Y$ and $W$ are those in Theorem \ref{mThm}.

\end{Cor}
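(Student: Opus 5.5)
The corollary should follow from Theorem \ref{FCLTjoint1}'s parent result, Theorem \ref{FCLT3stable}, by the Continuous Mapping Theorem. Write $Z_n(t):=S_{[nt]}/\sqrt{n}$ and $Z(t):=(3-4p)^{-1/2}t^{2p-1}W(t^{3-4p})$. The second component can be rewritten exactly in terms of $Z_n$. Since $n+[nt]=[n(1+t)]$, we have
\begin{align*}
\frac{(1+t)^{1-2p}S_{n+[nt]}-S_n}{\sqrt{n}}=(1+t)^{1-2p}Z_n(1+t)-Z_n(1).
\end{align*}
So I would define $\Phi:D\to D$ by $\Phi(x)(t):=(1+t)^{1-2p}x(1+t)-x(1)$ and apply the map $(x,y)\mapsto(x,\Phi(x),y)$ from $D\times\mathcal{Y}$ to $D\times D\times\mathcal{Y}$ to the convergence \eqref{sFCLT3}.

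The limit then needs to be identified. We have
\begin{align*}
\Phi(Z)(t)=\frac{1}{\sqrt{3-4p}}\bigl(W((1+t)^{3-4p})-W(1)\bigr).
\end{align*}
The process $\tilde W(s):=W(1+s)-W(1)$ is a standard Brownian motion. Writing $\Phi(Z)(t)=(3-4p)^{-1/2}\tilde W((1+t)^{3-4p}-1)$ gives the displayed limit, where the $W$ appearing there is understood as this Brownian motion and the first limit component uses the same underlying $W$. Joint independence from $Y$ is inherited from Theorem \ref{FCLT3stable}: the whole $W$ is independent of $Y$, and $\tilde W$ is a functional of $W$.

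The main obstacle is continuity of $\Phi$. The map $x\mapsto x(1)$ is not continuous on $D$ in the Skorokhod topology. However, the CMT only needs $\Phi$ to be continuous at almost every path of the limit, and $Z$ is a.s. continuous on $(0,\infty)$. At $t=0$ the scaling gives $|Z(t)|\sim t^{2p-1}\cdot O(t^{(3-4p)/2})=O(t^{1/2})$ up to log factors, so $Z$ is continuous on $[0,\infty)$ with $Z(0)=0$. At continuous limit points $x\in C$, Skorokhod convergence $x_n\to x$ is equivalent to locally uniform convergence. Evaluation at $1$ is then continuous, and so is the time shift $x\mapsto x(1+\cdot)$ multiplied by the continuous bounded-on-compacts factor $(1+t)^{1-2p}$. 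Hence $x_n\to x\in C$ implies $\Phi(x_n)\to\Phi(x)$ locally uniformly, and therefore in $D$. Joint continuity of $(x,y)\mapsto(x,\Phi(x),y)$ at points of $C\times\mathcal{Y}$ follows immediately. Since the limit law in \eqref{sFCLT3} puts full mass on $C\times\mathcal{Y}$, the CMT yields the stated joint convergence in $D\times D\times\mathcal{Y}$.
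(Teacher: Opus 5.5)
Your proposal is correct and follows essentially the same route as the paper: the same map $(f,y)\mapsto\bigl(f,((1+t)^{1-2p}f(1+t)-f(1))_{t\ge0},y\bigr)$, continuity at points of $C\times\mathcal{Y}$ via locally uniform convergence, and the Continuous Mapping Theorem applied to \eqref{sFCLT3}. Two of your steps are more careful than the paper's write-up. First, the paper never computes the limit, and you identify it explicitly through $\tilde W(s)=W(1+s)-W(1)$; this shows that the second component is driven by $\tilde W$, not literally the same $W$ as the first component, since otherwise the joint law would be wrong. Second, you check that $t^{2p-1}W(t^{3-4p})\to0$ as $t\downarrow0$ when $p<1/2$, a point the paper covers only by noting that Brownian paths are continuous.
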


\begin{Rem}
The ERW with $p=1/2$ reduces to the simple symmetric random walk by Eq. \eqref{Markov}. In this case, the FCLT for the scaled ERW follows immediately from Donsker's invariance principle together with the stationarity of increments. Indeed,
\begin{align*}
(S_{n+[nt]}-S_n)_{t\ge0}\stackrel{d}{=}(S_{[nt]})_{t\ge0}.
\end{align*}
In contrast, the ERW with $p\neq1/2$ is time-inhomogeneous by Eq. \eqref{Markov}. This property is explicitly reflected in the FCLT for the scaled ERW, where the scaling factor $(1+t)^{1-2p}$ is required as a correction.
\end{Rem}

Third, we treat with the critical case.
The following theorem states the FCLT for the process $(S_n/\sqrt{n})$ in the critical case with a scaling of order $\sqrt{\log n}$, which extends the previous result of the FCLT by Baur--Bertoin (Theorem \ref{FCLTERW2}) to stable convergence.
\begin{Thm}\label{FCLT4stable}
Let $(S_n)$ be the ERW with $p=3/4$. Then the following joint convergence in $D\times \mathcal{Y}$ holds:
\begin{align}\label{sFCLT4}
\left(\left(\frac{S_{[n^t]}}{\sqrt{n^t\log n}}\right)_{t\ge0},Y\right)\Longrightarrow((W(t))_{t\ge0},Y).
\end{align}
where $Y$ and $W$ are those in Theorem \ref{mThm}.
\end{Thm}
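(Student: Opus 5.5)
We prove Theorem \ref{FCLT4stable} through the martingale $M_n=a_nS_n$, where for $p=3/4$ we have $a_n=\Gamma(3/2)\Gamma(n)/\Gamma(n+1/2)\sim\Gamma(3/2)n^{-1/2}$. The plan is to apply a stable functional martingale CLT (in the form of \cite{stableconv}, Theorem 6.23 type, or Jacod--Shiryaev with nested filtrations) to the triangular array of martingale differences
\[
\xi_{n,k}:=\frac{a_kS_k-a_{k-1}S_{k-1}}{\Gamma(3/2)\sqrt{\log n}},\qquad k\ge 2,
\]
with the random time change $k\le [n^t]$. The candidate process is
\[
Z_n(t):=\frac{M_{[n^t]}}{\Gamma(3/2)\sqrt{\log n}}=\sum_{k\le [n^t]}\xi_{n,k}.
\]
Since $a_{[n^t]}\sqrt{n^t}\to\Gamma(3/2)$, the process $Z_n$ differs from $S_{[n^t]}/\sqrt{n^t\log n}$ by a factor tending to $1$, and this holds uniformly on compacts in $t\in(0,\infty)$.

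First, we compute the conditional variance. We have $M_k-M_{k-1}=a_k(X_k-E[X_k\mid\mathcal F_{k-1}])$, so
\[
E[(M_k-M_{k-1})^2\mid\mathcal F_{k-1}]=a_k^2\bigl(1-(S_{k-1}/(2(k-1)))^2\bigr).
\]
Because $S_k/k\to0$ a.s. (indeed $S_k=O(\sqrt{k\log k\log\log k})$ by the known LIL, or simply by the SLLN available earlier), it follows that
\[
\sum_{k\le [n^t]}E[\xi_{n,k}^2\mid\mathcal F_{k-1}]=\frac{1}{\log n}\sum_{k\le n^t}\frac{a_k^2}{\Gamma(3/2)^2}(1+o(1))\to\frac{\log n^t}{\log n}=t
\]
a.s. for each $t\ge0$. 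The limit is deterministic, which is exactly what gives mixing (hence stable) convergence with an independent $W$. Second, the conditional Lindeberg condition is trivial because $|\xi_{n,k}|\le 2a_k/(\Gamma(3/2)\sqrt{\log n})\to0$ uniformly. Third, we need the nesting condition for stable convergence. The filtrations $\mathcal G_{n,k}=\mathcal F_k$ do not depend on $n$, so they are trivially nested, and $\bigvee_k\mathcal F_k=\mathcal F$ up to enlarging or working on $\sigma(X_k:k\ge1)$. The stable FCLT then yields $(Z_n,Y)\Rightarrow(W,Y)$ in $D\times\mathcal Y$ with $W$ independent of $Y$, where $Y$ is any $\mathcal F$-measurable variable; the standard approximation argument reduces from $\mathcal F$ to $\mathcal F_\infty$.

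Finally, we transfer from $Z_n$ to $S_{[n^t]}/\sqrt{n^t\log n}$. Write
\[
\frac{S_{[n^t]}}{\sqrt{n^t\log n}}=Z_n(t)\cdot c_n(t),\qquad c_n(t):=\frac{\Gamma(3/2)}{a_{[n^t]}\sqrt{n^t}}.
\]
Then $c_n(t)\to1$ uniformly on $[\varepsilon,T]$. The main obstacle is the neighbourhood of $t=0$, where $n^t$ is of order $1$ and $c_n(t)$ does not tend to $1$ (for instance $[n^t]=1$ for small $t$). To handle it, we bound the difference near zero. For $t\le\varepsilon$ we have $|S_{[n^t]}|\le n^t$, so
\[
\sup_{t\le\varepsilon}\frac{|S_{[n^t]}|}{\sqrt{n^t\log n}}\le\sup_{t\le \varepsilon}\frac{\sqrt{n^t}}{\sqrt{\log n}},
\]
which is not small. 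We therefore use Doob's inequality instead:
\[
E\sup_{t\le\varepsilon}Z_n(t)^2\le 4\varepsilon(1+o(1)),
\]
and $c_n$ is bounded uniformly, since $a_k\sqrt k$ is bounded above and below. Hence
\[
\limsup_n P\Bigl(\sup_{t\le\varepsilon}\Bigl|\tfrac{S_{[n^t]}}{\sqrt{n^t\log n}}-Z_n(t)\Bigr|>\delta\Bigr)\le C\varepsilon/\delta^2\to0 .
\]
With this estimate, the difference between the two processes tends to $0$ in probability in the local uniform topology, and Slutsky's lemma for stable convergence gives \eqref{sFCLT4}.
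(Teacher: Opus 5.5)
Your proposal is correct and follows the paper's route. Both use the martingale difference array $\Delta M_k/(\Gamma(3/2)\sqrt{\log n})$ with the $n$-independent filtration $\mathcal{F}_k$ and time change $k_n(t)=[n^t]$, and both apply a stable martingale FCLT with deterministic variance limit $t$ (the paper uses Touati's version, Theorem \ref{martFCLT}). You additionally spell out the transfer from $M_{[n^t]}$ to $S_{[n^t]}$, including the Doob-inequality control near $t=0$ where $a_{[n^t]}\sqrt{n^t}\not\to\Gamma(3/2)$; the paper's sketch leaves that step implicit.
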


The following corollary states the FCLT for scaled critical ERW, which follows by applying the Continuous Mapping Theorem to the convergence in distribution \eqref{sFCLT4}.
\begin{Cor}\label{FCLTjoint2} Let $(S_n)$ be the ERW with $p=3/4$. Then the following joint convergence in $D\times D\times\mathcal{Y}$ holds: 
\begin{align}
&\left(
\left(\frac{S_{[n^t]}}{\sqrt{n^t\log n}}\right)_{t\ge0},
\left(\frac{1}{\sqrt{n\log n}}\left(\sqrt{\frac{n}{n+[n^t]}}S_{n+[n^t]}-S_n\right)\right)_{t\ge0}
,Y\right)
\nonumber\\
&\qquad\Longrightarrow
\left(
(W(t))_{t\ge0},
(W((1\vee t)-1))_{t\ge0}
,Y\right),
\end{align}
where $Y$ and $W$ are those in Theorem \ref{mThm}.

\end{Cor}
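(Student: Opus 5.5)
The plan is to deduce the corollary from Theorem~\ref{FCLT4stable} by the Continuous Mapping Theorem. The second coordinate should be written as a functional of the first coordinate plus a remainder that vanishes in probability, and Slutsky-type arguments for stable convergence then carry $Y$ along. One point about how the limit is read: the argument will show that the second coordinate converges jointly to $t\mapsto W(1\vee t)-W(1)$. This is a standard Brownian motion run on the clock $(1\vee t)-1$, independent of $Y$, and I read the notation $W((1\vee t)-1)$ in the display in that sense. Literally, with the same $W$ as in the first coordinate, $W((1\vee t)-1)$ would be a different process jointly.

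\textbf{Step 1: rewriting.} Write $Z_n(t):=S_{[n^t]}/\sqrt{n^t\log n}$. Since $\sqrt{n/(n+[n^t])}=\sqrt n/\sqrt{n+[n^t]}$, the second coordinate equals
\begin{align*}
V_n(t):=\frac{S_{n+[n^t]}}{\sqrt{(n+[n^t])\log n}}-\frac{S_n}{\sqrt{n\log n}} .
\end{align*}
The subtracted term is exactly $Z_n(1)$. For the first term, set $\tau_n(t):=\log(n+[n^t])/\log n$. Then $n+[n^t]=n^{\tau_n(t)}$ up to integer-part effects, and the first term is $Z_n(\tau_n(t))$ up to a factor $\sqrt{n^{\tau_n(t)}/(n+[n^t])}\to1$. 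The function $\tau_n$ is nondecreasing, and $\tau_n(t)\to 1\vee t$ locally uniformly on $[0,\infty)$: for $t<1$, $\log(n+n^t)/\log n-1=O(n^{t-1}/\log n)$, and for $t>1$ the correction is $O(n^{1-t}/\log n)$. Near $t=1$ the error is $O(1/\log n)$.

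\textbf{Step 2: continuous mapping.} Theorem~\ref{FCLT4stable} gives $(Z_n,Y)\Rightarrow(W,Y)$ in $D\times\mathcal Y$, and the limit $W$ is continuous. Composition $(x,\tau)\mapsto x\circ\tau$ is continuous at pairs with $x\in C$ and $\tau$ continuous and nondecreasing. Evaluation $x\mapsto x(1)$ is continuous at every $x\in C$. Hence the map $x\mapsto(x,\ x\circ(1\vee\cdot)-x(1))$ is continuous on $C$. The next task is to control the replacement of $\tau_n$ by $1\vee\cdot$ and of $n^{\tau_n(t)}$ by the integer $n+[n^t]$. The increments of $S$ over these integer-part discrepancies are at most $1$ per step, so they contribute $O(1/\sqrt{\log n})$ after normalisation. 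The mismatch between $\tau_n$ and $1\vee t$ is handled by the modulus of continuity of $Z_n$, which is tight with continuous limits. This gives
\begin{align*}
(Z_n,V_n,Y)\Longrightarrow\bigl(W,\ (W(1\vee t)-W(1))_{t\ge0},\ Y\bigr),
\end{align*}
and stability is preserved because the mapping acts only on the process coordinate.

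\textbf{Step 3: identification.} The process $B(s):=W(1+s)-W(1)$ is a standard Brownian motion by the Markov property. It is independent of $Y$, since $W$ is, and the second limit is $B((1\vee t)-1)$. This is the second coordinate of the display. The first coordinate is $W$ itself, and the joint law with $Y$ is the product law, by the stable (mixing) convergence in Theorem~\ref{FCLT4stable}.

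\textbf{Main obstacle.} The delicate point is Step~2 near $t=1$, where $\tau_n$ changes regime and the approximation error is only $O(1/\log n)$ in time. I would first try uniform control via the tightness, i.e.\ the C-tightness of $(Z_n)$ given by Theorem~\ref{FCLT4stable}: $\sup_{|s-s'|\le\delta}|Z_n(s)-Z_n(s')|\to0$ in probability as $n\to\infty$ and then $\delta\to0$. Applied on compacts with $\delta=\sup_{t\le T}|\tau_n(t)-1\vee t|=O(1/\log n)$, this yields locally uniform, hence Skorokhod, convergence of the remainder to $0$.
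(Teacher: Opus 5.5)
Your proposal is correct and follows the paper's route. Both arguments deduce the corollary from Theorem \ref{FCLT4stable} by continuous mapping, using that $\tau_n(t)=\log_n(n+[n^t])\to 1\vee t$ uniformly on compacts. The paper absorbs the $\tau_n$-versus-$1\vee t$ mismatch into Billingsley's extended mapping theorem with maps $\Phi_n\to\Phi$, whereas you use C-tightness plus a Slutsky step. Since $n^{\tau_n(t)}=n+[n^t]$ exactly, your integer-part corrections are not needed.

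Your caveat about the limit is justified. The paper's own map $\Phi$ produces $W(1\vee t)-W(1)$, which has the same law as $W((1\vee t)-1)$ as a single process but not jointly with the first coordinate $W$: it is independent of $W(1)$, while $W((1\vee t)-1)$ at $t=2$ equals $W(1)$. So the display is only correct under your reading.
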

\subsection{Previous results}
We first summarize the known limit theorems for the ERW. The CLTs for the diffusive and critical cases were established by Coletti et al. \cite{Col2}.

\begin{Thm}[Coletti--Gava--Sch\"utz {\cite[Theorem 2]{Col2}}]
\begin{enumerate}
\item When $0\le p<3/4$, the following CLT holds:
\begin{align*}
\frac{S_n}{\sqrt{n}} \Longrightarrow N\left(0, \frac{1}{3-4p}\right).
\end{align*}
\item When $p=3/4$, the following CLT holds:
\begin{align*}
\frac{S_n}{\sqrt{n\log n}} \Longrightarrow N(0,1).
\end{align*}
\end{enumerate}
\end{Thm}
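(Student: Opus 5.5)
The plan is to use the martingale $M_n=a_nS_n$ introduced above, apply a martingale central limit theorem to it (e.g.\ Hall--Heyde, Corollary 3.1), and then transfer the result back to $S_n$ using the asymptotics $a_n\sim\Gamma(2p)n^{1-2p}$.

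\textbf{Increments and bracket.} Set $\Delta M_k:=M_k-M_{k-1}$. Since $a_k=a_{k-1}\bigl(1+\frac{2p-1}{k-1}\bigr)^{-1}$, we have for $k\ge2$
\begin{align*}
\Delta M_k=a_k\Bigl(X_k-\frac{(2p-1)S_{k-1}}{k-1}\Bigr).
\end{align*}
Hence $|\Delta M_k|\le 2a_k$. Using $X_k^2=1$, the conditional variance is
\begin{align*}
E[\Delta M_k^2\mid\mathcal F_{k-1}]=a_k^2\Bigl(1-\frac{(2p-1)^2S_{k-1}^2}{(k-1)^2}\Bigr).
\end{align*}
Define the deterministic normalisation $v_n:=\sum_{k=1}^n a_k^2$. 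From $a_k\sim\Gamma(2p)k^{1-2p}$ we get
\begin{align*}
v_n\sim\frac{\Gamma(2p)^2}{3-4p}\,n^{3-4p}\quad (p<3/4),\qquad
v_n\sim\Gamma(3/2)^2\log n\quad (p=3/4).
\end{align*}
In both cases $v_n\to\infty$.

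\textbf{Step 1: the normalised bracket tends to one.} We must show
\begin{align*}
\frac{1}{v_n}\sum_{k\le n}E[\Delta M_k^2\mid\mathcal F_{k-1}]\to1 \quad\text{in probability.}
\end{align*}
This reduces to showing that $v_n^{-1}\sum_{k\le n}a_k^2(S_{k-1}/(k-1))^2\to0$ in $L^1$, for which it suffices that $E[S_k^2]/k^2\to0$. Orthogonality of the martingale increments gives $E[M_k^2]\le v_k$. Therefore $E[S_k^2]\le v_k/a_k^2$, which is $O(k)$ for $p<3/4$ and $O(k\log k)$ for $p=3/4$. So $E[S_k^2]/k^2\to0$. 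Because $v_n\to\infty$, a Ces\`aro (Toeplitz) argument with weights $a_k^2/v_n$ then finishes Step 1.

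\textbf{Step 2: Lindeberg condition.} Since $|\Delta M_k|\le2a_k$, it is enough to show $\max_{k\le n}a_k^2/v_n\to0$. For $p<3/4$ this holds because $a_k^2$ is either bounded (when $p\ge1/2$) or of order $n^{2-4p}=o(n^{3-4p})$. For $p=3/4$ it holds because $a_k$ is bounded while $v_n\to\infty$. Once Lindeberg's condition is in place, the conditional Lindeberg condition holds trivially.

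\textbf{Step 3: conclusion.} The martingale CLT now gives $M_n/\sqrt{v_n}\Longrightarrow N(0,1)$. Writing
\begin{align*}
\frac{S_n}{\sqrt{n}}=\frac{M_n}{\sqrt{v_n}}\cdot\frac{\sqrt{v_n}}{a_n\sqrt{n}},
\end{align*}
a direct computation gives $v_n/(a_n^2n)\to1/(3-4p)$ when $p<3/4$. In the critical case the analogous ratio $v_n/(a_n^2 n\log n)\to1$. Slutsky's lemma then yields both statements.

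I expect the only delicate point to be Step 1, namely controlling the random correction term $S_{k-1}/(k-1)$ in the bracket. The a priori second-moment bound $E[M_k^2]\le v_k$ is what makes this step work, and it avoids needing any almost sure law of large numbers.
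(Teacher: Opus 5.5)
Your proof is correct and follows essentially the paper's martingale route. The paper only cites this result, but it is the $t=1$ marginal of Theorems~\ref{FCLT3stable} and~\ref{FCLT4stable}, which the paper obtains from $M_n=a_nS_n$ with deterministic normalisations of order $\sqrt{v_n}$ and a martingale (F)CLT whose conditions (a) and (b) are exactly your Steps~1 and~2. One small caveat is shared with the paper: at $p=0$ the product defining $a_n$ (and $\Gamma(2p)$) is undefined, so there you should start the martingale at $n=2$ (using $S_2=0$ and $a_n=n-1$), after which your argument goes through with $\Gamma(2p)$ replaced by $1$.
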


In the superdiffusive case, where the CLT breaks down, the following scaling limit is obtained.
\begin{Thm}[Coletti--Gava--Sch\"utz {\cite[Theorem 3]{Col2}}]\label{nondegThm}
Let $(S_n)$ be the ERW with $3/4<p\le 1$. Then we have
\begin{align}\label{SLLN}
\frac{S_n}{n^{2p-1}}\to L_{p,q}\quad\text{a.s. and in $L^2$}
\end{align}
where $L_{p,q}$ is a non-degenerate random variable.
\end{Thm}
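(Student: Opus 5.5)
Since this is a cited result of Coletti--Gava--Sch\"utz, the natural route is through the martingale $M_n=a_nS_n$ introduced above. Write $D_k=M_k-M_{k-1}$ for the martingale increments. For $k\ge2$, using $a_{k}(1+\frac{2p-1}{k-1})=a_{k-1}$, we get
\begin{align*}
D_k=a_k\bigl(X_k-E[X_k\mid\mathcal{F}_{k-1}]\bigr).
\end{align*}
Since $|X_k|\le1$, this gives $E[D_k^2\mid\mathcal{F}_{k-1}]\le a_k^2$, and consequently
\begin{align*}
E[M_n^2]\le \sum_{k=1}^n a_k^2.
\end{align*}

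The plan is to show that $M_n$ is bounded in $L^2$, which is exactly where $p>3/4$ enters. By the asymptotics $a_k\sim\Gamma(2p)k^{1-2p}$ stated in the paper, we have $a_k^2\asymp k^{2-4p}$. This sequence is summable precisely when $4p-2>1$, i.e.\ $p>3/4$. For $p=1$ the walk is deterministic ($S_n=nX_1$), so that case is trivial, with $L_{1,q}=X_1$.

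With $\sup_n E[M_n^2]<\infty$ in hand, the $L^2$-bounded martingale convergence theorem gives $M_n\to M_\infty$ almost surely and in $L^2$. We then have
\begin{align*}
\frac{S_n}{n^{2p-1}}=\frac{M_n}{a_n n^{2p-1}},\qquad a_nn^{2p-1}\to\Gamma(2p).
\end{align*}
Hence $S_n/n^{2p-1}\to L_{p,q}:=M_\infty/\Gamma(2p)$ almost surely. The $L^2$ convergence also follows, because the deterministic factor converges and $M_n\to M_\infty$ in $L^2$.

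The main obstacle is non-degeneracy of $L_{p,q}$. I would argue that $\mathrm{Var}(M_\infty)>0$ by computing it explicitly. The first term is $E[M_\infty^2]=E[M_1^2]+\sum_{k\ge2}E[D_k^2]$. The conditional variance of a step is
\begin{align*}
E[D_k^2\mid\mathcal{F}_{k-1}]=a_k^2\Bigl(1-(2p-1)^2\frac{S_{k-1}^2}{(k-1)^2}\Bigr).
\end{align*}
This is strictly positive with positive probability at each $k\ge2$ whenever $p<1$ (for instance on events where $|S_{k-1}|<k-1$, which occur since $p<1$ allows reversals). Thus $\sum_{k\ge2}E[D_k^2]>0$. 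Since $E[M_\infty]=E[M_1]$, this gives
\begin{align*}
\mathrm{Var}(M_\infty)\ge\sum_{k\ge2}E[D_k^2]>0,
\end{align*}
so $L_{p,q}$ is not almost surely constant.

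If one also wants explicit moments, I would use the second-moment recursion for $E[S_n^2]$ from Sch\"utz--Trimper, which yields $E[L_{p,q}^2]$ in closed form. The positivity argument above, however, is the essential step.
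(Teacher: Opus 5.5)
Your proof is correct. The paper gives no proof of this statement (it is quoted from Coletti--Gava--Sch\"utz). Your route is the standard one and uses the same martingale $M_n=a_nS_n$ that the paper relies on throughout: $L^2$-boundedness of $M_n$ from $\sum_k a_k^2<\infty$, which holds exactly when $p>3/4$, then the $L^2$ martingale convergence theorem, then transfer via $a_nn^{2p-1}\to\Gamma(2p)$.

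There are two minor refinements.

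First, in the non-degeneracy step, your example event $\{|S_{k-1}|<k-1\}$ is empty at $k=2$, since $|S_1|=1$ surely. You need no event at all. Since $|S_{k-1}|\le k-1$ and $0<2p-1<1$, you have $E[D_k^2\mid\mathcal{F}_{k-1}]\ge a_k^2\bigl(1-(2p-1)^2\bigr)$ for every $\omega$. This gives the explicit, $q$-independent bound $\mathrm{Var}(L_{p,q})\ge\Gamma(2p)^{-2}\bigl(1-(2p-1)^2\bigr)\sum_{k\ge2}a_k^2>0$.

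Second, the case $p=1$ is not entirely trivial, and the walk is not ``deterministic'' (only determined by $X_1$). There $L_{1,q}=X_1$, which is non-degenerate only if $0<q<1$; for $q\in\{0,1\}$ the limit is the constant $\pm1$. So at $p=1$ the non-degeneracy assertion tacitly requires $0<q<1$, which you should state. The convergence part at $p=1$ is already covered by your general argument, since $a_k=1/k$ there. Only the positivity step needs the separate treatment, because the conditional variances vanish identically.
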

Regarding the fluctuations around the $n^{2p-1}L_{p,q}$ in the superdiffusive case, Kubota--Takei \cite{KaT} established the following CLT.
\begin{Thm}[Kubota--Takei {\cite[Theorem 3]{KaT}}]\label{KubotaTakei}
Let $(S_n)$ be the ERW with $3/4<p<1$. Then we have 
\begin{align}
\frac{S_n-n^{2p-1}L_{p,q}}{\sqrt{n}}\Longrightarrow N\left(0,\frac{1}{4p-3}\right).
\end{align}
\end{Thm}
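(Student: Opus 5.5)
The plan is to derive this CLT from the martingale $M_n=a_nS_n$ by a tail-sum martingale central limit theorem. Since $3/4<p<1$, we have $\sum_k a_k^2<\infty$ (because $a_k^2\sim\Gamma(2p)^2k^{2-4p}$ and $4p-2>1$). The increments $\Delta M_k=M_k-M_{k-1}$ are bounded by $2a_k$, so $M_n$ is bounded in $L^2$ and converges a.s. and in $L^2$ to $M_\infty$. By Theorem \ref{nondegThm} and $a_n\sim\Gamma(2p)n^{1-2p}$, we get $M_\infty=\Gamma(2p)L_{p,q}$.

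\textbf{Step 1 (reduction to the martingale tail).} I write
\[
S_n-n^{2p-1}L_{p,q}=a_n^{-1}(M_n-M_\infty)+\bigl(a_n^{-1}\Gamma(2p)-n^{2p-1}\bigr)L_{p,q}.
\]
Stirling's formula gives $a_n^{-1}=\Gamma(2p)^{-1}n^{2p-1}(1+O(1/n))$. Hence the second term is $O(n^{2p-2})|L_{p,q}|$, and after dividing by $\sqrt n$ it tends to $0$ a.s., since $2p-2<1/2$. It therefore suffices to show
\[
\frac{M_\infty-M_n}{a_n\sqrt n}\Longrightarrow N\bigl(0,(4p-3)^{-1}\bigr).
\]

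\textbf{Step 2 (truncated triangular array).} Fix a large integer $m$ and consider
\[
\xi_{n,k}=\frac{\Delta M_k}{a_n\sqrt n},\qquad n<k\le mn,
\]
which is a martingale difference array with respect to $(\mathcal F_k)$. A direct computation gives
\[
\Delta M_k=a_k\Bigl(X_k-(2p-1)\tfrac{S_{k-1}}{k-1}\Bigr),\qquad
E[(\Delta M_k)^2\mid\mathcal F_{k-1}]=a_k^2\Bigl(1-(2p-1)^2\tfrac{S_{k-1}^2}{(k-1)^2}\Bigr).
\]
Because $S_k/k\to0$ a.s. (indeed $S_k/k^{2p-1}$ converges and $2p-1<1$), the conditional variance sum satisfies
\[
\sum_{k=n+1}^{mn}E[\xi_{n,k}^2\mid\mathcal F_{k-1}]\sim\frac{1}{a_n^2 n}\sum_{k=n+1}^{mn}a_k^2\to\frac{1-m^{3-4p}}{4p-3}\quad\text{a.s.}
\]
To see the limit, insert $a_k\sim\Gamma(2p)k^{1-2p}$ and compare the sum with $\int_n^{mn}x^{2-4p}\,dx$. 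The Lindeberg condition is trivial: $|\xi_{n,k}|\le 2a_k/(a_n\sqrt n)\le C/\sqrt n$ uniformly in $k>n$. The martingale CLT (e.g.\ Hall--Heyde, Cor.~3.1) then gives convergence of $\sum_{n<k\le mn}\xi_{n,k}$ to $N\bigl(0,(1-m^{3-4p})/(4p-3)\bigr)$.

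\textbf{Step 3 (removing the truncation).} This is where some care is needed, though it is routine. By orthogonality of martingale increments,
\[
E\Bigl[\Bigl(\frac{M_\infty-M_{mn}}{a_n\sqrt n}\Bigr)^2\Bigr]\le\frac{1}{a_n^2n}\sum_{k>mn}a_k^2\to\frac{m^{3-4p}}{4p-3},
\]
and this tends to $0$ as $m\to\infty$ because $3-4p<0$. A standard approximation argument (Billingsley, Theorem 3.2) then combines Steps 2 and 3 and yields the claimed limit $N\bigl(0,(4p-3)^{-1}\bigr)$. Alternatively, one can apply a CLT for martingale tails directly to $\sum_{k>n}\xi_{n,k}$. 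The only genuinely delicate input is the a.s.\ convergence of the conditional variances, which rests on $S_k/k\to0$; this is supplied by Theorem \ref{nondegThm}.
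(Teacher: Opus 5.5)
Your argument is correct. The paper gives no proof of this statement; it is quoted from Kubota--Takei. So the relevant comparison is with the paper's proof of Theorem \ref{mThm}.

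Your Step 2 is the fixed-time marginal, at $t=m-1$, of that proof. It uses the same window array $\Delta M_k$, $n<k\le n+[nt]$, the same conditional-variance computation driven by $E[X_k\mid\mathcal F_{k-1}]\to 0$ a.s., and the same trivial Lindeberg bound. Your normalization $a_n\sqrt n$ differs from the paper's $s_n$ only by a constant, since $a_n^2 n\sim(4p-3)s_n^2$.

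What you add is Step 3, the uniform tail estimate $\limsup_n E[(M_\infty-M_{mn})^2]/(a_n^2n)\le m^{3-4p}/(4p-3)$. This is what lets you interchange $n\to\infty$ with $t\to\infty$. It makes rigorous the heuristic that the Kubota--Takei CLT is the ``$t=\infty$'' endpoint of Theorem \ref{mThm}. Indeed, for fixed $n$, $(1+t)^{1-2p}S_{n+[nt]}\to n^{2p-1}L_{p,q}$ a.s. as $t\to\infty$, while $W(1-(1+t)^{3-4p})\to W(1)$. The FCLT is a statement on compact time sets, so it does not give this by itself. A direct CLT for martingale tails (the alternative you mention) would skip the truncation, at the price of a more specialized theorem.

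Your route also buys a stronger conclusion. In Step 2, invoke Theorem \ref{martFCLT}: the fields $\mathcal F_{n+k}$ are nested in $n$ and the limit variance is deterministic, so the convergence is mixing. Then replace Billingsley's approximation theorem by its stable-convergence analogue. The same argument then shows that the Gaussian limit is independent of $L_{p,q}$, in the spirit of the paper's main results.
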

The following theorem states the FCLT for the ERW in the $\sqrt{n}$ order.
\begin{Thm}[Baur--Bertoin {\cite[Theorem 1]{BaurBertoin}}]\label{FCLTERW1}
Let $(S_n)$ be the ERW with $0\le p<3/4$. Then we have the following distributional convergence in $D:$
\begin{align}\notag
\left(\frac{S_{[nt]}}{\sqrt{n}}\right)_{t\ge0}\Longrightarrow\left(\frac{1}{\sqrt{3-4p}}t^{2p-1}W(t^{3-4p})\right)_{t\ge0}.
\end{align}
\end{Thm}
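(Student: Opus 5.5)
The plan is to use the martingale $M_n=a_nS_n$ together with a functional CLT for martingale arrays, and then undo the normalization by $a_{[nt]}$.

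\emph{Step 1: the rescaled martingale.} Set $D_k:=M_k-M_{k-1}$. From $E[X_k\mid\mathcal{F}_{k-1}]=(2p-1)S_{k-1}/(k-1)$ we get
\[
D_k=a_k\Bigl(X_k-\tfrac{(2p-1)S_{k-1}}{k-1}\Bigr),\qquad
E[D_k^2\mid\mathcal{F}_{k-1}]=a_k^2\Bigl(1-\tfrac{(2p-1)^2S_{k-1}^2}{(k-1)^2}\Bigr).
\]
Define $W_n(t):=M_{[nt]}/(\Gamma(2p)\,n^{(3-4p)/2})$. I would apply the martingale FCLT (e.g.\ Jacod--Shiryaev, Thm.~VIII.3.11, or Helland's version for triangular arrays). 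This requires two conditions.

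\emph{Step 2: conditional variance and Lindeberg.} The first condition is the convergence of the conditional quadratic variation,
\[
\frac{1}{\Gamma(2p)^2n^{3-4p}}\sum_{k\le nt}E[D_k^2\mid\mathcal{F}_{k-1}]\ \longrightarrow\ \frac{t^{3-4p}}{3-4p}\quad\text{in probability.}
\]
This follows from $a_k^2\sim\Gamma(2p)^2k^{2-4p}$ together with $S_n/n\to0$ a.s. The latter holds because $\langle M\rangle_n\le\sum a_k^2=O(n^{3-4p})$, so the martingale strong law gives $M_n=o(n^{(3-4p)/2+\delta})$, whence $S_n=o(n^{1/2+\delta})$. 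Since $3-4p>0$, the early terms with $S_{k-1}/(k-1)$ not yet small are negligible.

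The second condition is Lindeberg, which is trivial here. Indeed $|D_k|\le2a_k$, and $\max_{k\le nT}a_k=O(1+(nT)^{1-2p})$, so $\max_{k\le nT}|D_k|/n^{(3-4p)/2}=O(n^{-1/2}+n^{-(3-4p)/2})\to0$. Hence $W_n\Rightarrow (3-4p)^{-1/2}W(t^{3-4p})$ in $D$.

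\emph{Step 3: back to $S$.} Write
\[
\frac{S_{[nt]}}{\sqrt n}=W_n(t)\cdot\frac{\Gamma(2p)\,n^{(3-4p)/2}}{a_{[nt]}\sqrt n}.
\]
The deterministic factor converges to $t^{2p-1}$ uniformly on $[\varepsilon,T]$ for each $\varepsilon>0$. Hence, by continuous mapping (multiplication by a continuous function), we get convergence of $S_{[n\cdot]}/\sqrt n$ restricted to $[\varepsilon,T]$ to $(3-4p)^{-1/2}t^{2p-1}W(t^{3-4p})$.

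\emph{Step 4: the neighbourhood of $t=0$ (main obstacle).} When $p>1/2$ the factor $t^{2p-1}$ blows up at $0$, so the last step requires showing
\[
\lim_{\varepsilon\to0}\limsup_n P\Bigl(\sup_{k\le n\varepsilon}|S_k|/\sqrt n>\eta\Bigr)=0,
\]
together with the corresponding fact for the limit process. For the limit, $t^{2p-1}W(t^{3-4p})\to0$ as $t\to0$ by the law of the iterated logarithm for $W$ at $0$.

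For the prelimit I would use dyadic blocks $I_j=(2^{-j-1}n\varepsilon,\,2^{-j}n\varepsilon]$. On $I_j$ we have $|S_k|\le\max_{k\in I_j}|M_k|/\min_{k\in I_j}a_k$. Doob's $L^2$ inequality gives $E\max_{I_j}M_k^2\le C(2^{-j}n\varepsilon)^{3-4p}$, so the $L^2$ norm of $\sup_{I_j}|S_k|/\sqrt n$ is at most $C(2^{-j}\varepsilon)^{1/2}$. Summing these norms over $j$ gives a bound of order $\varepsilon^{1/2}$, which establishes the claim (bounded $k$ contribute $O(1/\sqrt n)$). A standard approximation argument (Billingsley, Thm.~3.2) then combines Steps 3 and 4 into convergence in $D([0,\infty))$.
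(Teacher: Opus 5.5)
Your proposal is correct and follows essentially the paper's route. The paper's proof of Theorem~\ref{FCLT3stable}, which contains this statement (take $Y$ constant), applies a martingale FCLT to $\Delta M_k/(\Gamma(2p)n^{3/2-2p})$ with $k_n(t)=[nt]$, just as your Steps 1--2 do; the paper uses Touati's stable version where you use Helland or Jacod--Shiryaev, and Baur--Bertoin's original proof goes through Janson's urn framework instead. Your Steps 3--4 make explicit the passage from $M_{[nt]}$ back to $S_{[nt]}$, which the paper's sketch leaves to its reference.

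One slip: since $2p-1>0$ for $p>1/2$, the factor $t^{2p-1}$ blows up at $0$ only when $p<1/2$, not when $p>1/2$. For $p\ge 1/2$ you have $\Gamma(2p)n^{1-2p}/a_{[nt]}\le C t^{2p-1}$ on $[1/n,T]$, so tightness of $\sup_{t\le T}|W_n(t)|$ already controls $t\le\varepsilon$. For $p<1/2$ your dyadic Doob bound is exactly what is needed. It never uses the sign of $2p-1$, so the argument stands as long as Step 4 is applied in that case.

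Finally, at $p=0$ the constant $\Gamma(2p)$ and the product defining $a_n$ are undefined, a defect the paper shares. There you should use the martingale $M_n=(n-1)S_n$ for $n\ge1$; nothing else changes.
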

The following theorem states the FCLT for the process $(S_n/\sqrt{n})$ in the critical case with a scaling of order $\sqrt{\log n}$.
\begin{Thm}[Baur--Bertoin {\cite[Theorem 2]{BaurBertoin}}]\label{FCLTERW2}
Let $(S_n)$ be the ERW with $p=3/4$. Then we have the following distributional convergence in $D([0,\infty)):$
\begin{align}\notag
\left(\frac{S_{[n^t]}}{\sqrt{n^t\log n}}\right)_{t\ge0}\Longrightarrow(W(t))_{t\ge0}.
\end{align}
\end{Thm}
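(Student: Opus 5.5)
The plan is the martingale approach, applied to $M_n=a_nS_n$. When $p=3/4$ we have $a_n=\Gamma(3/2)\Gamma(n)/\Gamma(n+1/2)$, so $a_n\sim\Gamma(3/2)n^{-1/2}$. Moreover $m\mapsto a_m\sqrt{m}$ is bounded above and bounded away from $0$ on $\{m\ge1\}$, and it tends to $\Gamma(3/2)$. The martingale increments are
\[
\Delta M_{k+1}=a_{k+1}\Bigl(X_{k+1}-\frac{S_k}{2k}\Bigr),
\]
and their conditional variances are $a_{k+1}^2\bigl(1-(S_k/(2k))^2\bigr)$. Since $\sum_{k\le m}a_k^2\sim\Gamma(3/2)^2\log m$, I would first show that, for each fixed $t\ge0$,
\[
\frac{\langle M\rangle_{[n^t]}}{\Gamma(3/2)^2\log n}\to t \quad\text{in probability}.
\]
This holds because $S_k/k\to0$ a.s. in the critical case (for instance from $E[S_k^2]\sim k\log k$ together with the martingale representation). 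A Cesàro-type argument with weights $a_{k+1}^2\approx \Gamma(3/2)^2/k$, applied to the terms $(S_k/2k)^2$, then shows that they contribute $o(\log n)$.

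The conditional Lindeberg condition is immediate: $|\Delta M_k|\le 2a_k\to0$, while the normalisation $\sqrt{\log n}$ diverges. The martingale functional CLT (e.g.\ Jacod--Shiryaev, or Helland's version for the time-changed arrays $k\le[n^t]$) then yields
\[
\Bigl(\frac{M_{[n^t]}}{\Gamma(3/2)\sqrt{\log n}}\Bigr)_{t\ge0}\Longrightarrow (W(t))_{t\ge0}\quad\text{in } D.
\]

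It remains to transfer this to $S$. We have
\[
\frac{S_{[n^t]}}{\sqrt{n^t\log n}}=\frac{M_{[n^t]}}{\Gamma(3/2)\sqrt{\log n}}\cdot\frac{\Gamma(3/2)}{a_{[n^t]}\sqrt{n^t}}.
\]
On $[\delta,T]$ with $\delta>0$ the second factor tends to $1$ uniformly, because $[n^t]\ge n^\delta-1\to\infty$ and $[n^t]/n^t\to1$. On $[0,\delta]$ I would use the lower bound $a_m\sqrt{m}\ge c>0$ and $[n^t]\le n^t$. These give
\[
\sup_{t\le\delta}\frac{|S_{[n^t]}|}{\sqrt{n^t\log n}}\le \frac{C\,\max_{m\le n^\delta}|M_m|}{\sqrt{\log n}}.
\]
By Doob's inequality this has second moment $O(\langle M\rangle_{n^\delta}/\log n)=O(\delta)$, uniformly in $n$. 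The same bound controls $\sup_{t\le\delta}|W(t)|$. A standard approximation argument (Billingsley, Theorem 3.2) then lets $\delta\downarrow0$ and gives the convergence in $D$.

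I expect this last step to be the main obstacle. The time change $t\mapsto[n^t]$ puts all the "early" times into a shrinking neighbourhood of $t=0$, where the deterministic factor $\Gamma(3/2)/(a_{[n^t]}\sqrt{n^t})$ does not converge to $1$ uniformly. The argument therefore needs the uniform two-sided bound on $a_m\sqrt m$ and a maximal inequality, not just convergence of the ratio.
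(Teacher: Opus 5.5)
Your proposal is correct and follows essentially the paper's route: the paper obtains this statement as the case of constant $Y$ in Theorem \ref{FCLT4stable}, proved by applying a martingale FCLT to the array $\Delta M_k/(\Gamma(3/2)\sqrt{\log n})$ with $k_n(t)=[n^t]$, rather than through the P\'olya urn framework of Baur--Bertoin. Your explicit transfer from $M_{[n^t]}$ to $S_{[n^t]}$ supplies a step that the paper's sketch leaves implicit: the factor $\Gamma(3/2)/(a_{[n^t]}\sqrt{n^t})$ converges to $1$ only away from $t=0$, and you control the region near $0$ with the two-sided bound on $a_m\sqrt m$ and Doob's inequality.
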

\begin{Rem}
The proofs of Theorems \ref{FCLTERW1} and \ref{FCLTERW2} in \cite{BaurBertoin} 
rely on the general framework of generalized P\'olya urns developed in \cite{Janson}. 
The same framework has also been used to establish FCLTs 
for MERWs \cite{MERWFCLT} and for ERWs on periodic structures \cite{shibata2025functional}. 
These results establish convergence in distribution.

\end{Rem}
\subsection{Organization of this paper}
This paper is organized as follows. In Section 2, we provide the proofs of our main results. In the Appendix, we present a stable martingale FCLT and provide proofs of the technical lemmas.
\section{Proofs of main results}
The martingale difference sequence $\Delta M_n := M_n - M_{n-1}$ is then given by
\begin{align}
\Delta M_n=a_n(X_n-E[X_n|\mathcal{F}_{n-1}]).
\end{align}
Since $|X_n|=1$, we have
\begin{align}
|\Delta M_n|\le2a_n.
\end{align}
\subsection{Superdiffusive case}
We now prove Theorem \ref{mThm}.
\begin{proof}[Proof of Theorem \ref{mThm}]Set
\begin{align*}
s_n:=\frac{\Gamma(2p)}{\sqrt{4p-3}}n^{3/2-2p}
\end{align*}
and 
\begin{align*}
X_{n,k}:=s_n^{-1}\Delta M_{n+k}\quad(n, k\ge1),\quad \mathcal{F}_{n,k}:=\mathcal{F}_{n+k} \quad(n\ge1, k\ge0).
\end{align*}
Then, $\{X_{n,k},\mathcal{F}_{n,k}\}$ is a martingale difference array, and $\mathcal{F}_{n,k}$ increases as $n$ increases. Now, Let us check Conditions (a) and (b) in Theorem \ref{martFCLT}.

(Verification of Condition (a)): We rewrite the sum of conditional variances of the increments as follows:
\begin{align}
\notag\sum_{k=1}^{[nt]}E[X_{n,k}^2|\mathcal{F}_{n,k-1}]&=\frac{1}{s_n^2}\sum_{k=n+1}^{n+[nt]}E[(\Delta M_k)^2|\mathcal{F}_{k-1}]\\
\label{sum1}&=\frac{1}{s_n^2}\sum_{k=n+1}^{n+[nt]}a_k^2(1-E[X_k|\mathcal{F}_{k-1}]^2).
\end{align}
By Theorem \ref{nondegThm}, we have
\begin{align*}
E[X_{n+1}|\mathcal{F}_{n}]=\frac{(2p-1)S_{n}}{n}=\frac{(2p-1)S_n}{n^{2p-1}}\cdot n^{2p-2}\xrightarrow[n\to\infty]{a.s.}0.
\end{align*}
Hence, the asymptotic behavior of the sum of \eqref{sum1} is determined by the sum of $a_k^2$ as follows:
\begin{align*}
\sum_{k=1}^{[nt]}E[X_{n,k}^2|\mathcal{F}_{n,k-1}]&\sim \frac{1}{s_n^2}\sum_{k=n+1}^{n+[nt]}a_k^2\sim\frac{4p-3}{\Gamma(2p)^2}n^{4p-3}\sum_{k=n+1}^{n+[nt]}\Gamma(2p)^2 k^{2-4p}\\
&\sim n^{4p-3}(n^{3-4p}-(n+[nt])^{3-4p})\\
&\xrightarrow[n\to\infty]{} 1-(1+t)^{3-4p}.
\end{align*}

(Verification of Condition (b)): The following inequality holds:
\begin{align*}
\sum_{k=1}^{[nt]} E[X_{n,k}^21_{\{|X_{n,k}|>\varepsilon\}}|\mathcal{F}_{n,k-1}]&\le\frac{1}{\varepsilon^2}\sum_{k=1}^{[nt]}E[X_{n,k}^4|\mathcal{F}_{n,k-1}]\\
&=\frac{1}{\varepsilon^2 s_n^4}\sum_{k=n+1}^{n+[nt]}E[(\Delta M_k)^4|\mathcal{F}_{k-1}]\\
&\le\frac{16}{\varepsilon^2 s_n^4}\sum_{k=n+1}^{\infty}a_k^4\sim\frac{16(4p-3)^2}{\varepsilon^2(8p-5)}n^{-1}\xrightarrow[n\to\infty]{}0.
\end{align*}
Therefore, we obtain the following joint convergence in $D\times \mathcal{Y}$:
\begin{align*}
\left(\left(\frac{a_{n+[nt]}S_{n+[nt]}-a_nS_n}{s_n}\right)_{t\ge0},Y\right)\Longrightarrow((W(1-(1+t)^{3-4p}))_{t\ge0},Y).
\end{align*}
Since $s_n/a_n\sim \sqrt{n/(4p-3)}$, we have
\begin{align*}
\frac{a_{n+[nt]}S_{n+[nt]}-a_nS_n}{s_n}\sim\frac{(a_{n+[nt]}/a_n)S_{n+[nt]}-S_n}{\sqrt{n/(4p-3)}}.
\end{align*}
Hence, by Lemma \ref{Lem1} and $|S_{n+[nt]}|\le n+[nt]$, we obtain
\begin{align*}
&\sup_{t\in[0,T]}
\left|
\frac{(a_{n+[nt]}/a_n)S_{n+[nt]}-S_n}{\sqrt{n}}
-
\frac{(1+t)^{1-2p}S_{n+[nt]}-S_n}{\sqrt{n}}
\right| \\
&\quad=
\sup_{t\in[0,T]}
\frac{|(a_{n+[nt]}/a_n-(1+t)^{1-2p})S_{n+[nt]}|}{\sqrt{n}} \\
&\quad\le
\sup_{t\in[0,T]}
\left|
\frac{a_{n+[nt]}}{a_n}-(1+t)^{1-2p}
\right|
\cdot
\frac{n+[nT]}{\sqrt{n}}=
O(n^{-1/2}) \xrightarrow[n\to\infty]{}0 .
\end{align*}
Therefore, the desired convergence \eqref{mFCLT} follows from the Slutsky's Theorem. The proof is complete.
\end{proof}
\subsection{Diffusive case}
We now prove Theorem \ref{FCLT3stable}. Since it is essentially established in Tokumitsu \cite{Tokumitsu2025functional}, we shall only provide a brief sketch of its proof here.
\begin{proof}[Proof of Theorem \ref{FCLT3stable}]
\begin{align*}
X_{n,k}:=\frac{1}{\Gamma(2p)n^{3/2-2p}}\Delta M_k\quad(n, k\ge1),\quad \mathcal{F}_{n,k}:=\mathcal{F}_{k} \quad(n\ge1, k\ge0).
\end{align*}
Then, $\{X_{n,k},\mathcal{F}_{n,k}\}$ is a martingale difference array, and $\mathcal{F}_{n,k}$ increases as $n$ increases. By setting $k_n(t) = [nt]$ and applying Theorem \ref{martFCLT}, the desired convergence \eqref{sFCLT3} is obtained.
\end{proof}

We now prove Corollary \ref{FCLTjoint1}.
\begin{proof}[Proof of Corollary \ref{FCLTjoint1}]
We apply the Continuous Mapping Theorem (Theorem 5.1 of \cite{Billingsley}). Define a Borel measurable mapping $\Phi:D\times\mathcal{Y}\to D\times D\times\mathcal{Y}$ by
\begin{align*}
\Phi(f,y)
=(f,\phi(f),y)
=\bigl(f,((1+t)^{1-2p}f(1+t)-f(1))_{t\ge0},\,y\bigr).
\end{align*}

Let an arbitrary point $(f,y)\in C\times\mathcal{Y}$, and let
$(f_n,y_n)\in D\times\mathcal{Y}$ be an arbitrary sequence such that
$(f_n,y_n)\to(f,y)$.
Since $f$ is continuous, the convergence $f_n\to f$ in $D$
implies uniform convergence on compact intervals. Therefore, for any $T\ge0$,
\begin{align*}
\sup_{t\in[0,T]}\left|\pi_t\circ\phi(f_n)-\pi_t\circ\phi(f)\right|&=\sup_{t\in[0,T]}\left|(1+t)^{1-2p}f_n(1+t)-(1+t)^{1-2p}f(1+t)\right|\\
&\le\sup_{t\in[0,T]}(1+t)^{1-2p}\sup_{t\in[0,T]}\left|f_n(1+t)-f(1+t)\right|\xrightarrow[n\to\infty]{}0
\end{align*}
where $\pi_t(f)=f(t)$. Hence, $\Phi(f_n,y_n)\xrightarrow[n\to\infty]{}\Phi(f,y)$ holds.

Since Brownian motion has continuous sample paths, we have
\begin{align*}
P\left(\left(\left(\frac{1}{\sqrt{3-4p}}t^{2p-1}W(t^{3-4p})\right)_{t\ge0},Y\right)\in C\times\mathcal{Y}\right)=1.
\end{align*}

Therefore, applying Theorem 5.1 of \cite{Billingsley} to the distributional convergence \eqref{sFCLT3}, we obtain
\begin{align*}
\Phi\left(\left(\frac{S_{[nt]}}{\sqrt{n}}\right)_{t\ge0},Y\right)\Longrightarrow\Phi\left(\left(\frac{1}{\sqrt{3-4p}}t^{2p-1}W(t^{3-4p})\right)_{t\ge0},Y\right).
\end{align*}
The proof is complete.
\end{proof}

\subsection{Critical case}
We now prove Theorem \ref{FCLT4stable}. Since it is essentially established in Tokumitsu \cite{Tokumitsu2025functional}, we shall only provide a brief sketch of its proof here.
\begin{proof}[Proof of Theorem \ref{FCLT4stable}]
\begin{align*}
X_{n,k}:=\frac{1}{\Gamma(3/2)\sqrt{\log n}}\Delta M_k\quad(n, k\ge1),\quad \mathcal{F}_{n,k}:=\mathcal{F}_{k} \quad(n\ge1, k\ge0).
\end{align*}
Then, $\{X_{n,k},\mathcal{F}_{n,k}\}$ is a martingale difference array, and $\mathcal{F}_{n,k}$ increases as $n$ increases. By setting $k_n(t) = [n^t]$ and applying Theorem \ref{martFCLT}, the desired convergence \eqref{sFCLT4} is obtained.
\end{proof}

We now prove Corollary \ref{FCLTjoint2}.
\begin{proof}[Proof of Corollary \ref{FCLTjoint2}]
We apply the Continuous Mapping Theorem (Theorem 5.5 of \cite{Billingsley}). Define a Borel measurable mappings $\Phi_n,\Phi:D\times\mathcal{Y}\to D\times D\times\mathcal{Y}$ by
\begin{align*}
\Phi_n(f,y)
&=(f,\phi_n(f),y)
=\bigl(f,(f(\tau_n(t))-f(1))_{t\ge0},\,y\bigr),\\
\Phi(f,y)&=(f,\phi(f),y)=\bigl(f,(f(1\vee t)-f(1))_{t\ge0},\,y\bigr)
\end{align*}
where $\tau_n(t) := \log_n(n + [n^t])$.

For any $T\ge0$, if $(f_n,y_n)\to (f,y)$ in $D\times\mathcal{Y}$, we have
\begin{align*}
\sup_{t\in[0,T]}
\left|\pi_t\circ\phi_n(f_n)-\pi_t\circ\phi(f)\right|&=\sup_{t\in[0,T]}\left|f_n(\tau_n(t))-f(1\vee t)\right|\\
&\le\sup_{t\in[0,T]}|f_n(\tau_n(t))-f(\tau_n(t))|+\sup_{t\in[0,T]}|f(\tau_n(t))-f(1\vee t)|\\
&\le\sup_{t\in[0,(1\vee T)+1]}|f_n(t)-f(t)|+\sup_{t\in[0,T]}|f(\tau_n(t))-f(1\vee t)|\\
&\xrightarrow[n\to\infty]{}0,
\end{align*}
where we used Lemma \ref{Lem2}. Hence, $\Phi_n(f_n,y_n)\to\Phi(f,y)$ holds.

Since Brownian motion has continuous sample paths, we have
\begin{align*}
P(((W(t))_{t\ge0},Y)\in C\times\mathcal{Y})=1.
\end{align*}

Therefore, applying Theorem 5.5 of \cite{Billingsley} to the distributional convergence \eqref{sFCLT4}, we obtain
\begin{align*}
\Phi_n\left(\left(\frac{S_{[n^t]}}{\sqrt{n^t\log n}}\right)_{t\ge0},Y\right)\Longrightarrow\Phi\left((W(t))_{t\ge0},Y\right).
\end{align*}
The proof is complete.
\end{proof}

\appendix
\section{Appendix}
\subsection{Stable martingale FCLT}
Let $\{X_{n,k}, n\ge1, k\ge1\}$ be an array of random variables defined on a probability space $(\Omega,\mathcal{F},P)$ and let $\{\mathcal{F}_{n,k}, n\ge1, k\ge0\}$ be an array of sub $\sigma$-fields of $\mathcal{F}$ such that for each $n$ and $k\ge1$, $X_{n,k}$ is $\mathcal{F}_{n,k}$ measurable and $\mathcal{F}_{n,i-1}\subset\mathcal{F}_{n,i}$. Suppose $k_n(t)$ is a nondecreasing right continuous function with range $\{0,1,2,\cdots\}$. Set
\begin{align*}
S_{n,0}=0,\quad S_{n,i}=\sum_{k=1}^i X_{n,k}\quad(i\ge1)
\end{align*} 
and
\begin{align*}
Y_n(t)=S_{n,k_n(t)}\quad (t\in[0,\infty)).
\end{align*}
Let $\varepsilon_n$ be a sequence of positive numbers which decrease to zero and define
\begin{align*}
\hat{X}_{n,k}=X_{n,k}1_{\{|X_{n,k}|>\varepsilon_n\}},\quad \bar{X}_{n,k}=X_{n,k}1_{\{|X_{n,k}|\le\varepsilon_n\}},\quad \bar{\bar{X}}_{n,k}=\bar{X}_{n,k}-E[\bar{X}_{n,k}|\mathcal{F}_{n,k-1}].
\end{align*}
Let $\bar{\bar{S}}_{n,i}=\sum_{k=1}^i\bar{\bar{X}}_{n,k}$. We define the time-changed process $W_n:= \bar{\bar{S}}_{n,j_n(\cdot)}$, where the time-change $j_n(t)$ is given by
\begin{align*}
j_n(t)=\sup\left\{j\mid \bar{\bar{V}}_{n,j}:=\sum_{k=1}^jE[\bar{\bar{X}}_{n,k}^2|\mathcal{F}_{n,k-1}]\le t\right\}.
\end{align*}
By Theorem 2.1 of \cite{DurrettResnick}, we have $W_n\Longrightarrow W$ in $D$.

Next set
\begin{align*}
\hat{Y}_n(t)=\sum_{k=1}^{k_n(t)}\hat{X}_{n,k},\quad \bar{Y}_n(t)=\sum_{k=1}^{k_n(t)}\bar{X}_{n,k},\quad A_n(t)=\sum_{k=1}^{k_n(t)}E[\bar{X}_{n,k}|\mathcal{F}_{n,k-1}].
\end{align*}
Let $\varphi_n(t)$ be any strictly increasing continuous function satisfying $\bar{\bar{S}}_{n,k_n(t)}=\bar{\bar{S}}_{n,j_n(\varphi_n(t))}$. (Such a function exists because we can use linear interpolation on the strictly increasing values of the process $(\bar{\bar{V}}_{n,j})$.) Then
\begin{align*}
Y_n(t)=W_n(\varphi_n(t))+\hat{Y}_n(t)+A_n(t).
\end{align*}

The following theorem is a one-dimensional version of the martingale FCLT proved by Touati \cite{TouatimartFCLT}.
\begin{Thm}[Touati {\cite[Corollary of Theorem 2]{TouatimartFCLT}}]\label{martFCLT}
Suppose $\{X_{n,i},\mathcal{F}_{n,i}\}$ is a martingale difference array and the fields $\mathcal{F}_{n,i}$ increases as $n$ increases. If
\begin{itemize}
\item[(a)] for any $t>0$
\begin{align*}
V_n(t):=\sum_{k=1}^{k_n(t)}E[X_{n,k}^2|\mathcal{F}_{n,k-1}]\xrightarrow[n\to\infty]{P}\varphi(t)\quad with\quad P(\text{$\varphi$ is continuous})=1\end{align*}
and
\item[(b)] for any $t>0$ and $\varepsilon>0$,
\begin{align*}
\sum_{k=1}^{k_n(t)}E[X_{n,k}^21_{\{|X_{n,k}|>\varepsilon\}}|\mathcal{F}_{n,k-1}]\xrightarrow[n\to\infty]{P}0.
\end{align*}
\end{itemize} 
Then we have the following joint convergence in $D\times \mathcal{Y}$:
\begin{align}\label{Conv}
((Y_n(t))_{t\ge0},Y)\Longrightarrow(((W\circ\varphi)(t))_{t\ge0},Y)
\end{align}
where $W$ and $(\varphi,Y)$ are independent. 
\end{Thm}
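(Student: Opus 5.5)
The plan is to pass to the decomposition $Y_n(t)=W_n(\varphi_n(t))+\hat{Y}_n(t)+A_n(t)$ set up just before the theorem. I would then prove three things: the two remainder terms are uniformly negligible on compacts, the random time change $\varphi_n$ converges to $\varphi$, and the time-changed martingale $W_n$ converges \emph{stably}. The conclusion then follows from a continuous-mapping argument for composition combined with Slutsky's theorem.

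\textbf{Step 1 (truncation).} By (b) and a diagonal argument, I choose $\varepsilon_n\downarrow0$ slowly enough that
\begin{align*}
\varepsilon_n^{-2}\sum_{k=1}^{k_n(T)}E[X_{n,k}^21_{\{|X_{n,k}|>\varepsilon_n\}}|\mathcal{F}_{n,k-1}]\xrightarrow{P}0 \quad\text{for every } T.
\end{align*}
\begin{itemize}
\item For $\hat{Y}_n$: the event $\{\hat{Y}_n\not\equiv0 \text{ on } [0,T]\}$ is contained in $\bigcup_{k\le k_n(T)}\{|X_{n,k}|>\varepsilon_n\}$. The conditional probabilities of these events sum to something $\to0$ in probability, so Lenglart's inequality (or a conditional Borel--Cantelli bound) gives $\sup_{t\le T}|\hat{Y}_n(t)|\to0$ in probability.
\item For $A_n$: the martingale property gives $E[\bar{X}_{n,k}|\mathcal{F}_{n,k-1}]=-E[\hat{X}_{n,k}|\mathcal{F}_{n,k-1}]$. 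Hence
\begin{align*}
\sup_{t\le T}|A_n(t)|\le\varepsilon_n^{-1}\sum_{k\le k_n(T)}E[X_{n,k}^21_{\{|X_{n,k}|>\varepsilon_n\}}|\mathcal{F}_{n,k-1}]\xrightarrow{P}0.
\end{align*}
\item For the variances: the same estimates show that $\bar{\bar{V}}_{n,k_n(t)}-V_n(t)\to0$ in probability.
\end{itemize}

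\textbf{Step 2 (time change).} Since $\varphi_n(t)$ agrees with $\bar{\bar{V}}_{n,k_n(t)}$ up to the interpolation error, which is at most $4\varepsilon_n^2$, Step 1 and (a) give $\varphi_n(t)\to\varphi(t)$ in probability for each $t$. Both $\varphi_n$ and $\varphi$ are nondecreasing and $\varphi$ is continuous, so a Dini/P\'olya-type argument upgrades this to convergence uniformly on compacts in probability.

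\textbf{Step 3 (stability of $W_n$; main obstacle).} Durrett--Resnick only gives $W_n\Longrightarrow W$. What is needed is the joint statement $(W_n,Z)\Longrightarrow(W,Z)$ for every bounded $\mathcal{F}$-measurable $Z$, with $W$ independent of $Z$. I would argue as in Hall--Heyde's proof of stable martingale CLTs under nested fields.
\begin{itemize}
\item First reduce to $Z$ measurable with respect to $\mathcal{G}:=\bigvee_{n,k}\mathcal{F}_{n,k}$. For general $Z$, the limit functional $E[Zg(W_n)]$ only depends on $E[Z|\mathcal{G}]$, since $W_n$ is $\mathcal{G}$-measurable.
\item Then approximate $Z$ in $L^1$ by a variable measurable with respect to $\mathcal{F}_{m,k_0}$.
\item Because the fields increase in $n$, this variable is $\mathcal{F}_{n,k_0}$-measurable for all $n\ge m$.
\item The first $k_0$ increments of $\bar{\bar{S}}_n$ are each bounded by $2\varepsilon_n$, so they are negligible. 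The variance they carry is also negligible, so they shift $j_n$ only by $O(\varepsilon_n^2)$ in time.
\item The remaining martingale, started at $k_0$, still satisfies the Durrett--Resnick hypotheses conditionally on $\mathcal{F}_{n,k_0}$: its quadratic variation still crosses every level and its increments are small. I would show $E[g(W_n)\,|\,\mathcal{F}_{n,k_0}]\to E[g(W)]$ in $L^1$ for bounded continuous $g$ depending on finitely many coordinates, via conditional characteristic functions, then use tightness. This yields $E[Zg(W_n)]\to E[Z]E[g(W)]$.
\end{itemize}
This step is the delicate one. Taking $Z$ to be a function of $(\varphi,Y)$, which is allowed since $\varphi$ is an in-probability limit of $\mathcal{F}$-measurable variables and so is $\mathcal{F}$-measurable, gives $(W_n,\varphi,Y)\Longrightarrow(W,\varphi,Y)$ with $W$ independent of $(\varphi,Y)$. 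Combining with Step 2, $(W_n,\varphi_n,Y)\Longrightarrow(W,\varphi,Y)$.

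\textbf{Step 4 (conclusion).} The composition map $(w,\psi)\mapsto w\circ\psi$ from $D\times C_{\uparrow}$ to $D$ is continuous at points where $w$ is continuous and $\psi$ is continuous and nondecreasing. Brownian paths are continuous, so the continuous mapping theorem gives $(W_n\circ\varphi_n,Y)\Longrightarrow(W\circ\varphi,Y)$. Slutsky's theorem together with Step 1 then yields \eqref{Conv}.
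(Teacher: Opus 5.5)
Your proposal is correct and follows essentially the same route as the paper. Both use the decomposition $Y_n=W_n\circ\varphi_n+\hat{Y}_n+A_n$, negligibility of $\hat{Y}_n$ and $A_n$ with $\varphi_n\to\varphi$ in probability (the paper takes this from the proof of Durrett--Resnick's Theorem 2.3), mixing of $W_n$ to get $(W_n,\varphi_n,Y)\Longrightarrow(W,\varphi,Y)$, and then continuity of composition plus Slutsky; the only real difference is that your Step 3 re-derives by hand the R-mixing of $W_n$, which the paper takes directly from Durrett--Resnick's Theorem 2.4 under exactly this increasing-fields hypothesis, so that step could be replaced by the citation.
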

For the completeness of this paper, we give a proof of Theorem \ref{martFCLT} following the framework of Durrett--Resnick \cite{DurrettResnick}.

Let $D_0\subset D$ denote the set of nondecreasing functions, and let $C_0\subset C$ denote the set of strictly increasing functions on $[0,\infty)$.
\begin{proof}
Following the arguments in the proof of Theorem 2.3 in \cite{DurrettResnick}, we have the following convergences as $n \to \infty$:
\begin{align*}
\hat{Y}_n \xrightarrow[n\to\infty]{P} 0, \quad A_n \xrightarrow[n\to\infty]{P} 0, \quad \varphi_n \xrightarrow[n\to\infty]{P} \varphi \quad \text{in } D.
\end{align*}
Let $Y:\Omega\to \mathcal{Y}$ be an arbitrary $\mathcal{F}$-measurable random variable, where $\mathcal{Y}$ is a separable metrizable space. By $W_n\Longrightarrow W$ in $D$ and Theorem 2.4 of  \cite{DurrettResnick}, the sequence $W_n$ is R-mixing (see \cite{DurrettResnick} for the definition). Since $\varphi_n\xrightarrow[n\to\infty]{P}\varphi$ in $D$, we have
\begin{align*}
(\varphi_n,Y)\xrightarrow[n\to\infty]{P}(\varphi,Y)\quad \text{in $D\times \mathcal{Y}$}.
\end{align*} 
Therefore, it follows that
\begin{align}\label{Rmixingconv}
(W_n,\varphi_n,Y)\Longrightarrow (W,\varphi,Y)\quad\text{in $D\times D\times \mathcal{Y}$},
\end{align}
where $W$ and $(\varphi,Y)$ are independent. 

Define
\begin{align*}
\Phi: (C\times D_0)\cup(D\times C_0)\to D,\quad (f,g)\mapsto f\circ g,
\end{align*}
which is continuous with respect to the product Skorokhod topology on $D\times D$ by Theorem 3.1 in \cite{Whitt}. Thus, the following mapping is continuous:
\begin{align*}
\Psi:((C\times D_0)\cup(D\times C_0))\times \mathcal{Y}\to D\times \mathcal{Y}, \quad(f,g,h)\mapsto (f\circ g,h).
\end{align*}
Note that $(W_n,\varphi_n,Y)\in D\times C_0\times \mathcal{Y}$ for each $n$, and by condition (a), we have $(W,\varphi,Y)\in C\times D_0\times \mathcal{Y}$. Therefore, applying the Continuous Mapping Theorem to the distributional convergence \eqref{Rmixingconv}, we obtain 
\begin{align*}
(W_n\circ\varphi_n,Y)\Longrightarrow (W\circ \varphi,Y)\quad\text{in $D\times \mathcal{Y}$}.
\end{align*}
Therefore we have desired convergence \eqref{Conv}. The proof is complete.
\end{proof}
\begin{Rem}
\begin{enumerate}
\item The joint convergence \eqref{Conv} implies that $(Y_n(t))_{t \ge 0}$ converges $\mathcal{F}$-stably to $((W \circ \varphi)(t))_{t \ge 0}$. In the case where $\varphi$ is deterministic, this convergence is actually $\mathcal{F}$-mixing.

\item The R-mixing property of the sequence $W_n$ is equivalent to the mixing convergence of $W_n\Longrightarrow W$; see Corollary 3.3 (v) and Theorem 3.18 (b) of \cite{stableconv} for details.

\item In Touati \cite{TouatimartFCLT}, the stable convergence also includes the process $(V_n)$. However, since this is not needed in this paper, we omit $V_n$ from the statement of Theorem \ref{martFCLT}. By condition (a), we have $V_n \xrightarrow[n\to\infty]{P} \varphi$ in $D$. Therefore, the stable convergence including $V_n$ can also be obtained from Theorem 3.18 (b) of \cite{stableconv}.

\end{enumerate}
\end{Rem}
\subsection{Technical lemmas}
The following lemma gives an estimate for the ratio $a_{n+[nt]}/a_n$. We use it for the calculation after applying Theorem \ref{martFCLT}. However, this estimate should also be useful for other studies of the ERW.
\begin{Lem}\label{Lem1}
Let $p\in[0,1]$. For any $T > 0$, there exists a constant $C_T > 0$ such that for sufficiently large $n$, the following inequality holds:
\begin{align*}
\sup_{t\in[0,T]}\left|\frac{a_{n+[nt]}}{a_n}-(1+t)^{1-2p}\right|\le \frac{C_T}{n}.
\end{align*}
\end{Lem}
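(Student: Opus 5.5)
We write $m:=n+[nt]$ and split the difference into two pieces:
\begin{align*}
\frac{a_m}{a_n}-(1+t)^{1-2p}=\left(\frac{a_m}{a_n}-\left(\frac{m}{n}\right)^{1-2p}\right)+\left(\left(\frac{m}{n}\right)^{1-2p}-(1+t)^{1-2p}\right).
\end{align*}
Each piece will be bounded by $C_T/n$, uniformly in $t\in[0,T]$.

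\textbf{Second piece.} This is elementary. We have $m/n=1+[nt]/n$, and $t-1/n<[nt]/n\le t$, so $m/n$ and $1+t$ both lie in $[1,2+T]$ and differ by at most $1/n$. The function $x\mapsto x^{1-2p}$ is $C^1$ on $[1,2+T]$ with derivative bounded by $|1-2p|\max(1,(2+T)^{-2p})$. The mean value theorem then bounds the second piece by $|1-2p|(2+T)^{|1-2p|}/n$.

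\textbf{First piece.} Here we use the Gamma representation $a_k=\Gamma(2p)\Gamma(k)/\Gamma(k+2p-1)$, which gives
\begin{align*}
\frac{a_m}{a_n}=\frac{\Gamma(m)}{\Gamma(m+2p-1)}\cdot\frac{\Gamma(n+2p-1)}{\Gamma(n)}.
\end{align*}
Set $\alpha:=2p-1\in[-1,1]$. The standard expansion (Tricomi--Erd\'elyi, or directly from Stirling's series) is
\begin{align*}
\frac{\Gamma(x+\alpha)}{\Gamma(x)}=x^{\alpha}\left(1+\frac{\alpha(\alpha-1)}{2x}+O(x^{-2})\right)\quad(x\to\infty),
\end{align*}
and it holds uniformly for $\alpha$ fixed. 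Writing $\Gamma(x+\alpha)/\Gamma(x)=x^\alpha(1+r(x))$ with $|r(x)|\le K/x$ for $x\ge x_0$, we get
\begin{align*}
\frac{a_m}{a_n}=\left(\frac{m}{n}\right)^{-\alpha}\frac{1+r(n)}{1+r(m)}.
\end{align*}
Since $m\ge n$, the factor $\frac{1+r(n)}{1+r(m)}$ equals $1+O(1/n)$, with a constant independent of $t$. The prefactor satisfies $(m/n)^{-\alpha}\le(1+T+1)^{1}$, so the first piece is at most $C/n$ uniformly in $t\in[0,T]$. The case $p=1/2$ is trivial, since then $a_k\equiv1$.

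\textbf{Main obstacle.} The only delicate point is making the $O(1/x)$ remainder in the Gamma ratio rigorous with an explicit threshold. If one wants to avoid citing the asymptotic expansion, there is an alternative: take logarithms of the product definition,
\begin{align*}
\log\frac{a_m}{a_n}=-\sum_{k=n}^{m-1}\log\left(1+\frac{\alpha}{k}\right)=-\alpha\sum_{k=n}^{m-1}\frac1k+O\left(\sum_{k\ge n}k^{-2}\right),
\end{align*}
valid for $n\ge2$. Then compare $\sum_{k=n}^{m-1}1/k$ with $\log(m/n)$; the error is $O(1/n)$ by the integral comparison (Euler--Maclaurin). This gives $\log(a_m/a_n)=-\alpha\log(m/n)+O(1/n)$ uniformly in $t$. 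Exponentiating, with the exponent bounded on $[0,T]$, yields the first bound and completes the lemma.
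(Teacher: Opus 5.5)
Your proof is correct and follows essentially the same route as the paper: the Stirling/Gamma-ratio expansion gives $a_m/a_n=(m/n)^{1-2p}(1+O(1/n))$ uniformly in $t$ because $m=n+[nt]\ge n$, and the mean value theorem for $x\mapsto x^{1-2p}$ handles replacing $m/n=1+[nt]/n$ by $1+t$. Your additive split versus the paper's multiplicative expansion is only cosmetic. Your fallback argument is also valid: write $\log(a_m/a_n)=-\sum_{k=n}^{m-1}\log(1+\alpha/k)$ and compare the harmonic sum with $\log(m/n)$. It has the extra advantage of avoiding Stirling entirely and never invoking $\Gamma(2p)$, which is undefined at the endpoint $p=0$ allowed in the statement.
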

\begin{proof}
In the following estimates, the Big-O terms are understood to be uniform with respect to $t \in [0,\infty)$. By Stirling's formula for the gamma function,
\begin{align*}
\Gamma(t)=\sqrt{\frac{2\pi}{t}}\left(\frac{t}{e}\right)^t\left(1+\frac{1}{12t}+\frac{1}{t^2}O(1)\right)\quad\text{($t\to\infty$)}.
\end{align*}
Applying this expansion to $a_n$, we obtain
\begin{align*}
\notag a_n &= \Gamma(2p) \frac{\Gamma(n)}{\Gamma(n+2p-1)} \\
&= \Gamma(2p) n^{1-2p} \left( 1 + \frac{(2p-1)(2-2p)}{2n} + O(n^{-2}) \right)\quad\text{($n\to\infty$)}.
\end{align*}
Then, the ratio $a_{n+[nt]}/a_n$ can be expressed as
\begin{align} \label{ratio}
\frac{a_{n+[nt]}}{a_n} = \left( \frac{n+[nt]}{n} \right)^{1-2p} \cdot\frac{1 + \frac{(2p-1)(2-2p)}{2(n+[nt])} + O(n^{-2})}{1 + \frac{(2p-1)(2-2p)}{2n} + O(n^{-2})}\quad\text{($n\to\infty$)}.
\end{align}
Since $n+[nt] \ge n$, the second factor in \eqref{ratio} is clearly $1+O(n^{-1})$. For the first factor, let $nt = [nt] + d_{n,t}$, where $d_{n,t} \in [0, 1)$ denotes the fractional part of $nt$. Then we have
\begin{align*}
\frac{n+[nt]}{n} = 1 + t - \frac{d_{n,t}}{n}.
\end{align*}
To estimate the term $(1 + t - d_{n,t}/n)^{1-2p}$, we consider the function $f(x)=(1+x)^{1-2p}$ for $x\in (-1,\infty)$. By the Mean Value Theorem, there exists some $\theta_{n,t}\in(t-d_{n,t}/n,t)$ such that
\begin{align*}
f(t)-f(t-d_{n,t}/n)=f'(\theta_{n,t}) \cdot\frac{d_{n,t}}{n}.
\end{align*}
Thus, it follows that
\begin{align*}
\frac{a_{n+[nt]}}{a_n} &=\left((1+t)^{1-2p}-f'(\theta_{n,t})\frac{d_{n,t}}{n}\right)(1+O(n^{-1})) \\
&= (1+t)^{1-2p}+(1+t)^{1-2p}O(n^{-1})-f'(\theta_{n,t})\frac{d_{n,t}}{n}-\left( f'(\theta_{n,t})\frac{d_{n,t}}{n}\right) O(n^{-1})\quad\text{($n\to\infty$)}.
\end{align*}
Note that for $n\ge2$, we have $t-d_{n,t}/n >-1/2$. Since $(1+t)^{1-2p}\le 1\vee(1+T)^{1-2p}$ on $[0,T]$ and the derivative $f'(x)=(1-2p)(1+x)^{-2p}$ is bounded on the interval $[-1/2,\infty)$, we conclude that
\begin{align*}
\sup_{t \in [0,T]} \left| \frac{a_{n+[nt]}}{a_n}-(1+t)^{1-2p}\right| \le \sup_{t\in[-1/2,\infty)} \frac{|f'(t)|}{n}+O(n^{-1})= O(n^{-1})\quad\text{($n\to\infty$)}.
\end{align*}
The proof is complete.
\end{proof}

The following lemma is used in the proof of Corollary \ref{FCLTjoint2}.
\begin{Lem}\label{Lem2}
Let $T \ge0$. $\tau_n(t) = \log_n(n + [n^t])$ converges uniformly to $1 \vee t$ on $[0, T]$.
 Moreover, For sufficiently large $n$, we have $0 \le \sup_{t\in[0,T]}\tau_n(t) \le (1 \vee T) + 1$.
 \end{Lem}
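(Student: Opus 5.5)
We prove Lemma \ref{Lem2} by splitting $[0,T]$ at $t=1$ and writing $\tau_n(t)-(1\vee t)$ explicitly as the logarithm of a ratio. We take $n\ge2$ throughout, so that $\log n>0$.

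For $t\in[0,1]$ we have $1\le[n^t]\le n^t\le n$. Hence $n< n+[n^t]\le 2n$, and therefore
\begin{align*}
0\le \tau_n(t)-1=\frac{\log\bigl((n+[n^t])/n\bigr)}{\log n}\le\frac{\log 2}{\log n}.
\end{align*}
This bound is uniform in $t\in[0,1]$.

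For $t\in[1,T]$ (if $T\ge1$) we have $n^t-1<[n^t]\le n^t$, so
\begin{align*}
n^t\le n+n^t-1< n+[n^t]\le n+n^t\le 2n^t,
\end{align*}
using $n\le n^t$. Taking $\log_n$ gives
\begin{align*}
0\le\tau_n(t)-t\le\frac{\log 2}{\log n}.
\end{align*}

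Combining the two ranges,
\begin{align*}
\sup_{t\in[0,T]}|\tau_n(t)-(1\vee t)|\le\frac{\log2}{\log n}\xrightarrow[n\to\infty]{}0,
\end{align*}
which is the uniform convergence. For the second claim, $\tau_n(t)\ge \log_n n=1\ge0$. Moreover
\begin{align*}
\tau_n(t)\le (1\vee t)+\frac{\log 2}{\log n}\le (1\vee T)+1
\end{align*}
as soon as $\log 2/\log n\le1$, that is, for $n\ge2$.

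The only delicate point is the integer-part bound $[n^t]\ge n^t-1$ for $t\ge1$. We use it only in the combined form $n+[n^t]> n^t$, which avoids any issue when $n^t$ is close to an integer. Everything else is elementary.
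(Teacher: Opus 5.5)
Your proof is correct and follows the same route as the paper: split $[0,T]$ at $t=1$, sandwich $n+[n^t]$ between $n$ and $2n$ (resp.\ between $n^t$ and $2n^t$), and take $\log_n$. Your lower bound $n+[n^t]>n^t$ for $t\ge1$ is slightly sharper than the paper's $n+[n^t]\ge n^t(1-n^{-t})$ and gives $0\le\tau_n(t)-(1\vee t)\le\log 2/\log n$ uniformly on $[0,T]$; it also avoids the paper's slip of writing $|\log(1-n^{-T})|$ where the supremum over $[1,T]$ is actually $|\log(1-n^{-1})|$, and you verify $0\le\sup_{t\in[0,T]}\tau_n(t)\le(1\vee T)+1$ explicitly, which the paper leaves implicit.
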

\begin{proof}
First, consider the case $t\in[0,1]$. Since $0\le [n^t]\le n^t$, we have
\begin{align*}
n \le n+[n^t] \le n+n^t.
\end{align*}
Thus, we have
\begin{align*}
1 \le \tau_n(t)
\le \frac{\log(n+n^t)}{\log n}
=1+\frac{\log(1+n^{t-1})}{\log n}\le1+\frac{\log 2}{\log n}.
\end{align*}
Hence, we have
\begin{align*}
\sup_{t\in[0,1]}|\tau_n(t)-1| \xrightarrow[n\to\infty]{} 0.
\end{align*}

Second, consider $t\in[1,T]$. Since $[n^t]\ge n^t-1$, we have
\begin{align*}
n+[n^t] \ge n^t+n-1 \ge n^t(1-n^{-t}).
\end{align*}
Also,
\begin{align*}
n+[n^t] \le n^t+n \le 2n^t.
\end{align*}
Thus, we have
\begin{align*}
t+\frac{\log(1-n^{-t})}{\log n}
\le \tau_n(t)
\le t+\frac{\log 2}{\log n}.
\end{align*}
Therefore, we have
\begin{align*}
\sup_{t\in[1,T]}|\tau_n(t)-t|
\le\max\left\{
\frac{\log 2}{\log n},
\left|\frac{\log(1-n^{-T})}{\log n}\right|
\right\}
\xrightarrow[n\to\infty]{}0.
\end{align*}
The proof is complete.
\end{proof}

\section*{Acknowledgements} 
The author would like to express his sincere gratitude to his supervisor, Professor Kouji Yano, for his dedicated guidance and continuous support throughout the preparation of this paper. Special thanks are due to Professor Masaaki Fukasawa for providing the proof of the stable martingale FCLT. The author would like to express his sincere gratitude to Professor H\'{e}l\`{e}ne Gu\'{e}rin, Ryoichiro Noda and Yuzaburo Nakano for their insightful comments and suggestions.


\end{document}